\newtheorem{lemma}{Lemma}
\newtheorem{theorem}{Theorem} 
\newtheorem{definition}{Definition}
\newtheorem{claim}{Claim}
\newtheorem{conjecture}{Conjecture}
\newcommand{\p}{\mathcal{P}}
\newcommand{\F}{\mathcal{F}}
\DeclareMathOperator{\ex}{ex}
\title{Planar Tur\'an Number of the $\Theta_6$}
\author[1]{\hspace{1cm}Debarun Ghosh}
\author[1,2]{Ervin Gy\H{o}ri} 
\author[1,3]{Addisu Paulos}
\author[1]{\newline Chuanqi Xiao}
\author[1,4]{Oscar Zamora}
\affil[1]{Central European University, Budapest\par
\texttt{ghosh\textunderscore debarun@phd.ceu.edu, xiao\textunderscore chuanqi@outlook.com}}
\affil[2]{Alfr\'ed R\'enyi Institute of Mathematics, Budapest \par
\texttt{gyori.ervin@renyi.mta.hu}}
\affil[3]{Addis Ababa University, Addis Ababa\par\texttt{addisu\textunderscore 2004@yahoo.com,addisu.wmeskel@aau.edu.et}}
\affil[4]{Universidad de Costa Rica, San Jos\'e\par
\texttt{oscarz93@yahoo.es}}
\date{}
\begin{document}
\maketitle
\begin{abstract} Let $\mathcal{F}$ be a nonempty family of graphs. A graph $G$ is called $\mathcal{F}$-\textit{free} if it contains no graph from $\mathcal{F}$ as a subgraph. For a positive integer $n$,  the \emph{planar Tur\'an number} of $\F$, denoted by $\ex_{\p}(n,\F)$, is the maximum number of edges in an $n$-vertex $\F$-free planar graph.

Let $\Theta_k$ be the family of Theta graphs on $k\geq 4$ vertices, that is, graphs obtained by joining a pair of non-consecutive vertices of a $k$-cycle with an edge. Lan, Shi and Song 
determined an upper bound  $\text{ex}_{\mathcal{P}}(n,\Theta_6)\leq \frac{18}{7}n-\frac{36}{7}$, but for large $n$, they did not verify that the bound is sharp.
In this paper, we improve their bound by proving $\text{ex}_{\mathcal{P}}(n,\Theta_6)\leq \frac{18}{7}n-\frac{48}{7}$ and then we demonstrate the existence of infinitely many positive integer $n$ and an $n$-vertex $\Theta_6$-free planar graph attaining the bound. 
\end{abstract}
\section{Introduction}
In this paper, all graphs considered are planar, undirected, finite, and contains neither loops nor multiple edges. We use the notations $C_k$ and  $P_k$ to denote a cycle and a path on $k$ vertices respectively. Let $\Theta_k$ denote the family of Theta graphs on $k\geq 4$ vertices, that is, graphs obtained by joining a pair of non-consecutive vertices of a $C_k$ with an edge. We describe a member of $\Theta_k$ as a $\Theta_k$-graph. It can be checked that the family $\Theta_k$ contains $\lfloor{\frac{k}{2}}\rfloor-1$ $\Theta_k$-graphs.

Let $\mathcal{F}$ be a family of graphs. A graph $G$ is called $\mathcal{F}$-\textit{free} if it contains no graph from $\mathcal{F}$ as a subgraph. The case that $\mathcal{F}=\{F\}$, we may say $G$ is $F$-free instead of saying $\mathcal{F}$-free. Accordingly, we shall call a graph $G$ as $\Theta_k$-free if it contains no $\Theta_k$-graph as a subgraph. Moreover, we say $G$ contains $\Theta_k$ if there is a $\Theta_k$-graph contained in $G$ as a subgraph. Notice that a graph $G$ is $\Theta_k$-free if and only if $G$ contains no non-induced $k$-cycle as a subgraph. 

For a nonempty family of graphs $\F$ and a positive integer $n$, the \emph{Tur\'an number} of $\F$, denoted by $\ex(n,\F)$, is the maximum number of edges in an $n$-vertex $\F$-free graph, i.e.,  
\begin{align*}
\ex(n,\F)=\max \{e(G): \mbox{$G$ is an $n$-vertex $\F$-free graph} \}.
\end{align*}
The case that $\F=\{F\}$, we simply denote $\ex(n,\F)$ by $\ex(n,F)$.

Regarding this in 1941, Tur\'an~\cite{turan} proved a classical result in the field of extremal graph theory. He determined exactly the Tur\'an number of any complete graph. A major breakthrough in the study of Tur\'an number of graphs came in 1966, with the proof of the famous theorem by Erd\H{o}s, Stone and Simonovits~\cite{erdos1,erdos2}. They determined an asymptotic value of the Tur\'an number of any non-bipartite graph $F$. In particular, they proved  $\ex(n,F)=\left(1-\frac{1}{\chi(F)-1}\right){n\choose 2}+o(n^2)$, where $\chi(F)$ is the chromatic number of $F$.
Since the two results researchers have been interested working on Tur\'an number of class of bipartite (degenerate) graphs and extremal graph problems with some more generality.

\begin{definition}
Let $\F$ be a nonempty family of graphs and $n$ be a positive integer. The \textbf{planar Tur\'an number of $\F$}, denoted by $\ex_{\p}(n,\F)$, is the maximum number of edges an $n$-vertex $\F$-free planar graph may contain, i.e., 
\begin{align*}
\ex_{\p}(n,\F)=\max \{e(G): \mbox{$G$ is an $n$-vertex $\F$-free planar graph} \}.
\end{align*}
The case that $\F=\{F\}$, we simply denote $\ex_{\p}(n,\F)$ by $\ex_{\p}(n,F)$.
\end{definition}

In 2015, Dowden~\cite{dowden} initiated the study of planar Tur\'an number of graphs. He determined sharp upper bounds of $\ex_{\mathcal{P}}(n,C_4)$ and $\ex_{\mathcal{P}}(n,C_5)$.
\begin{theorem}~\cite{dowden}
\begin{enumerate}
    \item $\ex_{\mathcal{P}}(n,C_4)\leq \frac{15(n-2)}{7}$ , for all $n\geq 4$. 
    \item $\ex_{\mathcal{P}}(n,C_5)\leq \frac{12n-33}{5}$,  for all $n\geq 11$.
\end{enumerate}
\end{theorem}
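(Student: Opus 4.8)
The plan is to handle the two parts separately, reducing each to the case where $G$ is $2$-connected. If $G$ has a cut vertex one splits $G$ there into $G_1,G_2$ with $|V(G_1)|+|V(G_2)|=n+1$ and applies induction; if $G$ is disconnected one argues componentwise; the finitely many small configurations are dealt with by hand. This is exactly where the hypotheses $n\ge4$ (for $C_4$) and $n\ge11$ (for $C_5$) are needed: for smaller $n$ a triangle, respectively a chain of copies of $K_4$ glued at vertices, already exceeds $\tfrac{15(n-2)}{7}$, respectively $\tfrac{12n-33}{5}$. So assume $G$ is $2$-connected and fix a plane embedding; then every face is bounded by a cycle. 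Write $f_i$ for the number of faces of length $i$, so $\sum_i f_i=f$ and $\sum_i if_i=2e$, and recall Euler's formula $n-e+f=2$.

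For the $C_4$ bound the proof is short. Since $G$ is $C_4$-free it has no two triangles sharing an edge (their union contains a $4$-cycle), hence no face of length $4$, and every edge lies on at most one triangular face; so, writing $t:=f_3$, the $3t$ edges incident with triangular faces are pairwise distinct, and as $G$ has no bridge each of them borders, besides its triangular face, exactly one further face, necessarily of length $\ge5$. Counting incidences between these edges and faces of length $\ge5$ in two ways gives $3t\le\sum_{i\ge5}if_i=2e-3t$, so $t\le e/3$. On the other hand $2e=3t+\sum_{i\ge5}if_i\ge3t+5(f-t)=5f-2t$, which with $f=e-n+2$ gives $3e\le5(n-2)+2t$; substituting $t\le e/3$ and clearing denominators yields $7e\le15(n-2)$, i.e. $e\le\tfrac{15(n-2)}{7}$.

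For the $C_5$ bound a more careful local analysis is required. Locating a $5$-cycle in each forbidden configuration one shows: $G$ has no $5$-face; no triangular face shares an edge with a quadrilateral face; a triangular face is adjacent across an edge to at most one other triangular face, the only exception being the four faces of a $K_4$-subgraph; and no vertex of degree $\ge4$ lies on three consecutive triangular faces. Since $f_5=0$ we have $2e=3f_3+4f_4+\sum_{i\ge6}if_i\ge6f-3f_3-2f_4$, which with $f=e-n+2$ gives $e\le\tfrac{6(n-2)+3f_3+2f_4}{4}$. It therefore suffices to prove a linear bound of the form $3f_3+2f_4\le\tfrac{18}{5}(n-4)$: the structural facts above confine the triangular faces to small, pairwise-separated clusters --- single triangles, pairs of triangles, or $K_4$-blobs, none adjacent to a $4$-face --- and restrict how quadrilateral faces can agglomerate, which should give the required estimate and hence $e\le\tfrac{12n-33}{5}$.

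Making that last estimate precise is, I expect, the main obstacle: extracting the exact constant forces a detailed case analysis of the local picture around short faces and around vertices of small degree, and one must separately treat the degenerate cases in which the vertices named in a forbidden configuration collide --- these (for instance $K_4$ itself) are precisely the configurations that generically contain a $C_5$ but do not here. A secondary difficulty is the $2$-connected reduction for the $C_5$ bound: a crude block decomposition is too lossy, so graphs built from only a few blocks have to be analysed directly, and it is this that pins down the threshold $n\ge11$.
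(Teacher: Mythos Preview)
This theorem is quoted from Dowden~\cite{dowden} and the present paper gives no proof of it, so there is no in-paper argument to compare yours against.

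For what it is worth: your proof of part~(1) is complete and correct --- the two inequalities $t\le e/3$ and $3e\le 5(n-2)+2t$ combine exactly as you say. Your sketch for part~(2), however, has a genuine gap that you yourself flag: you reduce the problem to an inequality of the shape $3f_3+2f_4\le\tfrac{18}{5}(n-4)$ and then stop, conceding that deriving the exact constant from your structural observations (no $5$-face, restricted adjacency among triangular and quadrilateral faces, the $K_4$ exception) would require a substantial further case analysis. That is indeed where the work lies, and the list of local constraints you give is not by itself sufficient to force the claimed coefficient. As written, part~(2) is a plausible plan but not a proof.
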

Extending Dowden's results,  Lan, Shi and Song~\cite{lan} obtained sharp upper bound for $\text{ex}_{\mathcal{P}}(n,\Theta_i)$, for $i\in\{4, 5\}$ and an upper bounds for $\text{ex}_{\mathcal{P}}(n,\Theta_6)$. 
\begin{theorem}\cite{lan} \label{sw}
\begin{enumerate}
    \item $\ex_{\mathcal{P}}(n,\Theta_4)\leq \frac{12(n-2)}{5}$, for all $n\geq 4$.
    \item $\ex_{\mathcal{P}}(n,\Theta_5)\leq \frac{5(n-2)}{2}$, for all $n\geq 5$.
    \item \label{lm3}$\ex_{\mathcal{P}}(n,\Theta_6)\leq \frac{18(n-2)}{7}$, for all $n\geq 6$, with equality when $n=9.$
\end{enumerate}
\end{theorem}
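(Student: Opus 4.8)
First I would reduce to the $2$-connected case. If a $\Theta_6$-free plane graph $G$ is disconnected or has a cut vertex, decompose it along the block--cut-vertex tree; summing the per-block estimate $e(B)\le\tfrac{18\,n(B)-48}{7}$ over all blocks $B$ with $n(B)\ge 5$, together with the trivial check that the small blocks $K_2,K_3,C_4$ are ``efficient'' (edges per newly introduced vertex at most $18/7$), yields the claimed bound for $G$ with room to spare. So assume $G$ is $2$-connected, fix a plane embedding, and note every face is bounded by a cycle of length at least $3$. Writing $f_\ell$ for the number of faces of length $\ell$, we have $\sum_\ell f_\ell=f$ and $\sum_\ell \ell f_\ell=2e$, and by Euler's formula $n-e+f=2$ the inequality $e\le\tfrac{18n-48}{7}$ is equivalent, after substituting $n=2+e-f$, to
\[
\sum_{F}\bigl(11\,|F|-36\bigr)\ \ge\ 24 .
\]
Since $11\ell-36$ equals $-3$ for $\ell=3$ and is at least $8$ for every $\ell\ge 4$ (and at least $30$ for $\ell\ge 6$), the task is to show that the surplus produced by the non-triangular faces covers the deficit $3f_3$ of the triangular faces and still leaves a surplus of $24$.

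The combinatorial engine is the elementary equivalence: $G$ is $\Theta_6$-free if and only if \emph{every $6$-cycle of $G$ is chordless}. From this I would extract a list of local restrictions, each proved by exhibiting an explicit $C_6$ together with a chord: (i) no two quadrilateral faces share an edge; (ii) no triangular face shares an edge with a pentagonal face; (iii) the boundary of every $6$-face is an induced cycle of $G$; and (iv) a \emph{triangular block} --- a maximal union of triangular faces successively glued along shared edges --- is small: an open strip or open fan of four triangles, a $5$-wheel, or an octahedral patch already contains a $\Theta_6$, so each triangular block is one of a short list of small triangulated disks (a single triangle, two triangles sharing an edge, a $3$-strip, a $K_4$, a $W_4$, and a few more), and moreover a block with two or more triangles is forced, via the chordless-$6$-cycle criterion, to meet only faces of length at least $6$ along each of its boundary edges that lies in the outer part of the block. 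Part (iv), in particular pinning down exactly which small disks occur and which faces may border them, is where I expect most of the work to live.

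With these restrictions in hand I would run a discharging argument: give each face the initial charge $11|F|-36$, and let every face of length $\ell\ge 4$ redistribute its positive charge to the triangular blocks it borders (a quadrilateral, carrying only $8$, keeps what a neighbouring block needs and passes the rest along shared edges; a face of length $\ge 6$, carrying at least $30$, is very generous). By (iv) a triangular block with $t$ triangular faces has enough boundary edges, and those edges border faces generous enough, that the block can be brought to charge exactly $0$ while leaving every donor nonnegative. The final and most delicate point --- the source of the improvement over $\tfrac{18(n-2)}{7}$ --- is to upgrade ``every face ends $\ge 0$'' to ``a surplus of $24$ remains'': either $G$ is one of finitely many sporadic small graphs (checked directly), or one argues that some donor is unavoidably under-used (the outer face; a face of length $\ge 6$ forced by (i)--(iii) near a quadrilateral or near a multi-triangle block; a donor bordering fewer blocks than its length allows), and that the accumulated slack totals at least $24$. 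I expect this global slack estimate, rather than the face-by-face bookkeeping, to be the main obstacle.

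For sharpness I would exhibit an explicit infinite family. The graph $K_{2,3}$ already meets the bound: $n=5$, $e=6$, three quadrilateral faces, no $6$-cycle at all, and $\sum_F(11|F|-36)=3\cdot 8=24$. In general I would describe a gadget that embeds inside a quadrilateral face on $7$ new vertices and $18$ new edges, splitting that face into $8$ triangular faces and $4$ quadrilateral faces while creating no chorded $6$-cycle and leaving at least one new quadrilateral face available for the next insertion. Iterating the gadget $j$ times starting from $K_{2,3}$ produces a $2$-connected $\Theta_6$-free plane graph on $n=7j+5$ vertices with $e=18j+6=\tfrac{18n-48}{7}$ edges, where $f_3=8j$, $f_4=3j+3$, and $\sum_F(11|F|-36)=24$ --- exactly the equality case of the inequality above. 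Verifying via the chordless-$6$-cycle criterion that no insertion ever creates a chorded $6$-cycle is routine but must be carried out with care.
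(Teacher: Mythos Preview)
This theorem is not proved in the paper: it is quoted from Lan--Shi--Song as prior work, so there is no proof here to compare your proposal against. Moreover, your proposal does not actually target this statement. The inequality you set up, $e\le\tfrac{18n-48}{7}$ (equivalently $\sum_F(11|F|-36)\ge 24$), is the paper's \emph{main} theorem, which sharpens the additive constant in part~(3) from $-36/7$ to $-48/7$; and you say nothing at all about parts~(1) and~(2).

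Granting that you meant the main theorem, the overall shape matches: both arguments classify the possible triangular blocks (the paper proves they have at most five vertices and lists the eight types explicitly) and both aim at the same linear inequality, which the paper writes as $24f-17e+6n\le 0$. The paper does not discharge from faces, however; it assigns to each triangular block $B$ contributions $f(B),e(B),n(B)$ summing to $f,e,n$, and verifies $g(B):=24f(B)-17e(B)+6n(B)\le 0$ block by block.

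Your step~(iv) contains a real error that breaks the discharging. It is \emph{false} that a triangular block with two or more triangles borders only faces of length $\ge 6$. Take $B_{4,b}$, the diamond of two triangles on vertices $x_1x_2x_3x_4$ with diagonal $x_1x_3$; if $x_2,x_4$ are its only junction vertices, both exterior $2$-paths $x_2x_1x_4$ and $x_2x_3x_4$ may border $4$-faces $x_2x_1x_4x_5$ and $x_2x_3x_4x_6$. The resulting six-vertex configuration contains no $6$-cycle whatsoever (each of $x_5,x_6$ is adjacent only to $x_2$ and $x_4$), hence no $\Theta_6$. In the paper's bookkeeping this block carries $g(B_{4,b})=3>0$, and the fix is to group each such $B_{4,b}$ with its four incident trivial edges --- which carry spare negative $g$ --- into one partition class, after first checking that no trivial edge is ``bad'' for two different $B_{4,b}$'s. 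Your discharging outline has a $4$-face, carrying only charge $8$, asked to feed a two-triangle block on each side, with no mechanism for this cross-subsidy; as written it does not close. The same issue arises for $B_{4,a}=K_4$ with only two junction vertices, which can also abut a $4$-face. Finally, your extremal construction (built from $4$-faces) is different from the paper's (which uses only $3$-faces and $6$-faces), and since you leave the gadget entirely implicit it cannot be checked.
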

From (\ref{lm3}) of Theorem \ref{sw}, Lan, Shi and Song deduced that $\text{ex}_{\mathcal{P}}(n, C_6)\leq \frac{18(n-2)}{7}$.   Recently, Ghosh, Gy\H{o}ri, Martin, Paulos and Xiao~\cite{ghosh}, improved the bound by giving a sharp upper bound. 

In this paper, we improve the additive constant of the bound of $\text{ex}_{\mathcal{P}}(n,\Theta_6)$ given in Theorem \ref{sw} and illustrate that our bound is sharp. More precisely, we give infinitely many positive $\Theta_6$-free planar graphs attaining the bound. For more results on planar Tur\'an numbers of other graphs, we refer\cite{FW, DG2, DG, LY1}. 
\begin{theorem}\label{vcb}
Let $n\geq 6$. If $G$ is an $n$-vertex $2$-connected  $\Theta_6$-free planar graph with $\delta(G)\geq3$, then $e(G)\leq \frac{18}{7}n-\frac{48}{7}.$
\end{theorem}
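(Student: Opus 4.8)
The plan is to fix a plane embedding of $G$ and work with its faces. Since $G$ is $2$-connected, every face is bounded by a cycle; write $\ell(f)$ for the length of a face $f$, let $t$ be the number of triangular faces and $b_i$ the number of faces of length $i$. Substituting $n=e-f+2$ and $2e=\sum_f\ell(f)$ from Euler's formula, the desired bound $e(G)\le\frac{18}{7}n-\frac{48}{7}$ is equivalent to
\[
\sum_f\bigl(11\,\ell(f)-36\bigr)\ \ge\ 24 ,
\]
i.e.\ if each face $f$ is given the weight $w(f)=11\,\ell(f)-36$ (so a triangle weighs $-3$, a $4$-face $+8$, a $5$-face $+19$, and an $\ell$-face $11\ell-36$), the total weight must be at least $24$. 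The previously known constant $-\frac{36}{7}$ corresponds exactly to the trivial estimate ``total weight $\ge 0$'', so the whole improvement consists in producing the extra $24$.

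The first ingredient is a package of structural lemmas extracted from $\Theta_6$-freeness, whose prototype is: if two faces of lengths $\ell_1,\ell_2$ share an edge and their boundary cycles meet only along that edge, then the two remaining arcs together with the shared edge form a cycle of length $\ell_1+\ell_2-2$ with that edge as a chord, hence a $\Theta_{\ell_1+\ell_2-2}$. Carrying this out, and disposing of the degenerate cases where the two boundary cycles meet in more than one edge (which for $n\ge 6$ force either a $\Theta_6$ or a forbidden tiny component), I expect to obtain: \textbf{(L1)} every $4$-face shares an edge with \emph{at most one} triangular face; \textbf{(L2)} no $5$-face shares an edge with a triangular face; \textbf{(L3)} no vertex has four consecutive triangular faces in its rotation; and \textbf{(L4)} a classification of the \emph{triangular clusters} --- the connected components of the graph whose vertices are the triangular faces, two being adjacent when they share an edge --- showing that each cluster has at most four faces and that a cluster $C$ with $k_C$ triangular faces has at least $k_C$ \emph{boundary edges} (edges lying on exactly one triangle of $C$), with equality only for the ``wheel-like'' clusters $W_3$, $W_4$, and $K_4$ with one triangle attached to an edge.

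Granting these, the argument becomes a short count. The other side of any boundary edge of a cluster is necessarily non-triangular, so by (L1) and (L2) each $4$-face appears as that outer face for at most one such edge, each $5$-face for none, and an $\ell$-face for at most $\ell$. Summing over clusters,
\[
t=\sum_C k_C\ \le\ \sum_C\bigl(\#\text{boundary edges of }C\bigr)\ \le\ b_4+\sum_{\ell\ge 6}\ell\, b_\ell .
\]
Inserting this into $\sum_f\bigl(11\ell(f)-36\bigr)=-3t+8b_4+19b_5+\sum_{\ell\ge 6}(11\ell-36)b_\ell$ reduces the target inequality to $5\,b_4+19\,b_5+\sum_{\ell\ge 6}(8\ell-36)b_\ell\ge 24$, and since $8\ell-36\ge 12$ for $\ell\ge 6$ it suffices that $5b_4+19b_5+12\sum_{\ell\ge 6}b_\ell\ge 24$.

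The main obstacle is this last inequality --- equivalently, ruling out $\Theta_6$-free graphs with ``too few'' large faces --- and this is where $\delta(G)\ge 3$, $2$-connectivity and $n\ge 6$ finally get used in force. The inequality can fail only for finitely many face-vectors ($b_4\le 4$ with everything else a triangle; one pentagon and the rest triangles; one hexagon with at most two quadrilaterals; and a few more), and for each I would derive a contradiction. Several evaporate immediately from the structural lemmas (for instance ``one pentagon, all other faces triangles'' forces $t\le 0$ by (L2), so $f=1$, absurd); the genuinely delicate ones are small graphs such as an $8$-vertex, $14$-edge configuration with four triangular and four quadrilateral faces, where one must show that the way the four quadrilaterals close up around the unique four-triangle cluster always creates a $\Theta_6$. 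I expect this finite case analysis, together with the verification of (L4) and of the degenerate instances of (L1)--(L2), to be the technical heart of the proof; the explicit extremal construction given later in the paper shows that the bound $24$ --- hence the constant $-\frac{48}{7}$ --- cannot be improved.
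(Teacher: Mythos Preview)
Your reformulation $\sum_f(11\ell(f)-36)\ge 24$ is correct and is in fact equivalent to the paper's target inequality $24f(G)-17e(G)+6n(G)\le 0$. But the structural lemmas you build the argument on are false, and the breakdown is visible already in the paper's extremal construction and in its list of triangular blocks.

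Concretely: the block $B_{5,a}=K_5^-$ has five triangular faces and only three exterior (boundary) edges, so your (L4) fails badly --- for this cluster $k_C=5>3=\#\text{boundary edges}$. The same block has an interior vertex of degree $4$ all of whose incident faces are triangles, so (L3) is false. And the paper's ``bad'' $B_{4,b}$ configuration (a diamond $x_1x_2x_3x_4$ with diagonal $x_1x_3$, flanked by $4$-faces $x_2x_1x_4x_5$ and $x_2x_3x_4x_6$) gives a $4$-face $x_2x_1x_4x_5$ that shares the edge $x_1x_2$ with one triangular face and the edge $x_1x_4$ with another; no $\Theta_6$ arises because the two triangles share the vertex $x_3$, so (L1) is false. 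Most tellingly, the extremal graphs in the paper have only triangular and hexagonal faces with $t=30k+12$ and $b_6=3k+2$, so your key inequality $t\le b_4+\sum_{\ell\ge 6}\ell\,b_\ell$ would read $30k+12\le 18k+12$, which fails for every $k\ge 1$.

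The paper's proof is, in spirit, a discharging argument as you envision, but the right objects to discharge over are not individual faces or loose clusters: they are the \emph{triangular blocks} (your clusters together with their vertex sets), with each block $B$ assigned $g(B)=24f(B)-17e(B)+6n(B)$, where $n(B)$ splits junction vertices among the blocks sharing them. The point you are missing is that a dense block like $K_5^-$ contributes many triangles but also ``owns'' most of its vertices (its two interior vertices count fully toward $n(B)$), so $g(B)\le 0$ still holds. The case analysis then runs over the finitely many block types on $\le 5$ vertices, and the single genuinely positive case (the $B_{4,b}$ diamond flanked by two $4$-faces, exactly your (L1) counterexample) is neutralised by pairing it with its four adjacent trivial edges. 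Your face-only bookkeeping cannot see the vertex contribution and therefore cannot recover this cancellation.
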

\begin{theorem}\label{bvc}
$\ex_{\mathcal{P}}(n,\Theta_6)\leq \frac{18}{7}n-\frac{48}{7}$, for all $n\geq 14$.
\end{theorem}

The following notations are used frequently in the upcoming sections. Let $G$ be a graph. We denote the vertex and the edge sets of $G$ by $V(G)$ and $E(G)$ respectively. The number of vertices and edges in $G$ are respectively denoted by $v(G)$ and $e(G)$.  For a vertex $v$ in $G$, the degree of $v$ is denoted by $d_{G}(v)$. We may omit the subscript if the underlying graph is clear.  The set of all vertices in $G$ which are adjacent to $v$ is denoted as $N(v)$.  We use $\delta(G)$ to denote the minimum degree of $G$. For the sake of simplicity, we use the term $k$-cycle to mean the cycle of length $k$. Similarly for $k$-path, $k$-face, and so on. A describe a path with end vertices $u$ and $v$ as a $(u,v)$-path. 


\section{Extremal Construction}
In this section we shall show the existence of infinitely many positive integers $n$ and $\Theta_6$-free planar graphs $G$ on $n$ vertices such that $e(G)=\frac{18}{7}n-\frac{48}{7}$. This is done based on Dowden's construction (with some modifications) in \cite{dowden}  while proving the bound $\text{ex}_{\mathcal{P}}(n, C_5)\leq \frac{12n-33}{5}$ is tight. The following lemma plays the central role in proving so. 
\begin{lemma}\cite{dowden}\label{gfr}
For infinitely many values of $k$, there exists a plane triangulation $T_k$ with vertex set $\{v_1, v_2,\cdots, v_k\}$ satisfying 
\begin{enumerate}
\item $d(v_i)=4$ for $i\leq 6,$ 
\item $d(v_i)=6$ for $i>6,$
\item $E(T_k)\supset\{v_1v_2,v_3v_4,v_5v_6\}.$
\end{enumerate}
\end{lemma}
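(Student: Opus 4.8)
The plan is to build $T_k$ explicitly by starting from a small symmetric triangulation and repeatedly inserting ``rings'' of new vertices, each of degree $6$, while keeping six special vertices of degree $4$. First I would take as the base case the octahedron: it is a plane triangulation on $6$ vertices, every vertex has degree $4$, and one can pick a perfect matching $\{v_1v_2, v_3v_4, v_5v_6\}$ among its edges (the octahedron, being $K_{2,2,2}$, certainly contains a perfect matching consisting of three independent edges). This settles condition (3) once and for all, since we will never delete edges, and it gives $k=6$ as the first value. The task then reduces to producing, from a triangulation $T_k$ meeting (1)--(3), a larger triangulation $T_{k'}$ meeting the same conditions, with the old degree-$4$ vertices $v_1,\dots,v_6$ retaining degree $4$ and all newly added vertices having degree $6$.

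The key step is the ring-insertion operation. Fix the outer face of $T_k$ (or any face); more robustly, think of $T_k$ drawn on the sphere and choose a region far from $v_1,\dots,v_6$. Inside a chosen face, or along a chosen cycle, one adds a concentric cycle of new vertices and triangulates the annulus between the old boundary cycle and the new cycle so that every new vertex ends up with degree exactly $6$ and every old vertex on the boundary has its degree increased by a controlled amount. The cleanest version: take any triangular face $xyz$ of $T_k$ with $x,y,z \notin \{v_1,\dots,v_6\}$, and perform a ``hexagonal refinement'' step local to a patch of faces that avoids the six special vertices — for instance, repeatedly subdividing using the standard operation that replaces a patch of the triangular lattice by a finer patch of the triangular lattice, in which all interior vertices have degree $6$. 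Because the infinite triangular lattice is $6$-regular, any sufficiently large planar patch of it has all interior vertices of degree $6$; the construction amounts to enlarging such a patch while leaving a fixed $6$-regular (or in our case $4$-then-frozen) neighborhood of $v_1,\dots,v_6$ untouched. Iterating gives triangulations on infinitely many values of $k$.

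The main obstacle is bookkeeping the degrees at the \emph{interface} between the part being refined and the rest of the graph: when we glue a finer lattice patch into a face or along a separating cycle, the vertices on the seam are shared, and one must check their final degrees come out to $6$ (or, for $v_1,\dots,v_6$, stay at $4$ — which is why the refinement region must be chosen in the interior, bounded away from those six vertices, so their neighborhoods are never modified). Concretely I would keep the six special vertices and their incident faces frozen throughout, only ever refining within the complementary region, and verify that the standard triangular-lattice subdivision, when applied to a region bounded by a cycle $C$, can be set up so that each vertex of $C$ gains the same number of incident triangles as it loses, hence keeps degree $6$. Conditions (1) and (2) then follow by construction, (3) is inherited from the octahedral base, and planarity and the triangulation property are preserved at every step since each operation only refines a planar triangulated region into a finer planar triangulated region. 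This yields $T_k$ for an infinite arithmetic-progression-like set of $k$, which is all that is claimed.
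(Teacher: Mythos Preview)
The paper does not prove this lemma; it is quoted from Dowden \cite{dowden} and used as a black box, with only the case $k=15$ illustrated in Figure~\ref{ccf}. So there is no in-paper argument to compare against, and I assess your sketch on its own terms.

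There is a genuine gap at the very first step. You take the octahedron as $T_6$, which does satisfy (1)--(3), but your inductive refinement requires a face $xyz$ (or more generally a region) with $x,y,z \notin \{v_1,\dots,v_6\}$. In the octahedron \emph{every} vertex lies in $\{v_1,\dots,v_6\}$, so no such face or region exists and the induction cannot leave the base case. You would need either a larger base case already containing degree-$6$ vertices that bound a face disjoint from the special six, or an explicit first move that touches the special vertices yet keeps their degree at~$4$; you supply neither.

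The refinement step itself is also described only by intent. You correctly flag the seam degrees as ``the main obstacle'' but never exhibit an operation that inserts only degree-$6$ vertices while leaving all old degrees unchanged. Subdividing a single triangular face into a fine triangular grid gives the new interior vertices degree~$6$ but the new side vertices degree~$4$; refining a larger patch of faces fixes the interior seams but pushes the same defect to the patch boundary. Until you write down a concrete move and actually verify the interface degrees, this remains a plausible outline rather than a proof.
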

For instance the case of $T_{15}$ is as shown in Figure \ref{ccf}. The red edges shown in the figure are those edges indicated in (3) of Lemma \ref{gfr}.  
\begin{figure}[ht]
\centering
\begin{tikzpicture}[scale=0.12]
\draw[thick](-6,20)--(0,26)--(6,20)--(0,14)--(-6,20)--(0,10)--(6,20);
\draw[thick](-6,-20)--(0,-26)--(6,-20)--(0,-14)--(-6,-20)--(0,-10)--(6,-20);
\draw[thick](-6,0)--(0,10)--(6,0)--(0,6)--(-6,0)(-6,0)--(0,-10)--(6,0)--(0,-6)--(-6,0);
\draw[thick](0,14)--(0,10)--(0,6)(0,-14)--(0,-10)--(0,-6);
\draw[thick,red](0,26)--(0,14)(0,6)--(0,-6)(0,-26)--(0,-14);
\draw[thick](-6,20)--(-6,0)--(-6,-20);
\draw[thick](6,20)--(6,0)--(6,-20);
\draw[thick](6,-20)--(0,-30)--(-6,-20)(0,-26)--(0,-30);
\draw[black,thick](6,20)..controls (18,25) and (18,-25) .. (6,-20);
\draw[black,thick](-6,20)..controls (-18,25) and (-18,-25) .. (-6,-20);
\draw[black,thick](6,20)..controls (25,30) and (29,-30) .. (0,-30);
\draw[black,thick](-6,20)..controls (-25,30) and (-29,-30) .. (0,-30);
\draw[black,thick](0,26)..controls (35,35) and (35,-35) .. (0,-30);
\draw[fill=black](0,6)circle(25pt);
\draw[fill=black](0,10)circle(25pt);
\draw[fill=black](0,14)circle(25pt);
\draw[fill=black](0,26)circle(25pt);
\draw[fill=black](6,0)circle(25pt);
\draw[fill=black](6,20)circle(25pt);
\draw[fill=black](-6,0)circle(25pt);
\draw[fill=black](-6,20)circle(25pt);
\draw[fill=black](0,-6)circle(25pt);
\draw[fill=black](0,-10)circle(25pt);
\draw[fill=black](0,-14)circle(25pt);
\draw[fill=black](0,-26)circle(25pt);
\draw[fill=black](6,0)circle(25pt);
\draw[fill=black](6,-20)circle(25pt);
\draw[fill=black](-6,0)circle(25pt);
\draw[fill=black](-6,-20)circle(25pt);
\draw[fill=black](0,-30)circle(25pt);
\end{tikzpicture} 
\caption{An example of Lemma \ref{gfr} with 15 vertices.}
\label{ccf}
\end{figure}
As a consequence of Lemma \ref{gfr}, we have the following theorem. 
\begin{theorem}
There exist infinitely many positive integers $n$ and $\Theta_6$-free planar graphs $G$ on $n$ vertices such that $e(G)=\frac{18}{7}n-\frac{48}{7}.$ 
\end{theorem}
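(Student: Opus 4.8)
The plan is to start from the plane triangulation $T_k$ provided by Lemma \ref{gfr} and modify it locally at the three prescribed edges $v_1v_2$, $v_3v_4$, $v_5v_6$ so as to destroy every potential $\Theta_6$ while keeping the edge count high. Concretely, I would subdivide each of the three red edges once, inserting a new vertex of degree $2$ on each; this turns a $k$-vertex triangulation (which has $3k-6$ edges) into a graph $G$ on $n=k+3$ vertices with $e(G)=(3k-6)+3=3k-3=3(n-3)-3=3n-12$ edges. That is too many — it would violate our own upper bound — so the subdivision alone cannot be the whole story; instead I expect the construction to \emph{remove} the three red edges and replace each by a small gadget (for instance, replacing the triangle-pair sharing a red edge by a configuration on a couple of extra vertices) designed so that the resulting graph has exactly $\frac{18}{7}n-\frac{48}{7}$ edges. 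The arithmetic $\frac{18}{7}n-\frac{48}{7}=\frac{18(n-2)}{7}-\frac{12}{7}$ suggests the right count comes from taking the triangulation, which locally has too many edges per vertex, and diluting it with a controlled number of low-degree vertices so that the global average degree drops to $\tfrac{36}{7}$.

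The key structural point to verify is $\Theta_6$-freeness. A plane triangulation is full of short cycles, so the modification must genuinely eliminate all $\Theta_6$-subgraphs, not merely the ones through the modified region. Here I would use the very specific degree sequence of $T_k$: the six vertices $v_1,\dots,v_6$ have degree $4$ and all other vertices have degree $6$, and the red edges form a matching among the degree-$4$ vertices. A $\Theta_6$ consists of two vertices joined by three internally disjoint paths of lengths summing appropriately (total $6$ edges, three paths, so lengths like $1+2+3$, $2+2+2$, $1+1+4$ — equivalently a $6$-cycle with one chord). I would argue that in $T_k$ itself every such configuration must pass through one of the three red edges or a bounded neighborhood of the degree-$4$ vertices — this is where the regularity and the explicit planar embedding of $T_k$ get used — and then show the local gadget replacement breaks exactly those. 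The cleanest route is probably: (i) show $T_k$ minus the three red edges is already $\Theta_6$-free (or $C_6$-with-chord-free) by a face/rotation-system argument exploiting that the only "defect" faces are the three quadrilaterals created by deleting the red edges, and the degree-$6$ part is locally like the triangular lattice which one checks has no $\Theta_6$ avoiding the boundary; (ii) re-add a modified gadget at each defect that contributes edges but no new $\Theta_6$.

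The steps, in order: first, fix the family of valid $k$ from Lemma \ref{gfr} and compute $n$ and the target edge count as a function of $k$, reverse-engineering what the gadget must contribute (I expect each gadget to add a fixed small number of vertices and edges, so that $n$ runs over an arithmetic progression and $e(G)$ hits $\frac{18}{7}n-\frac{48}{7}$ on the nose); second, exhibit the gadget explicitly and check planarity of the result (immediate, since we only modify inside faces); third, prove $\Theta_6$-freeness via the two-part argument above; fourth, confirm the edge count by direct summation. The main obstacle is step three: ruling out \emph{all} $\Theta_6$ copies in a large, highly connected planar graph is delicate because $\Theta_6$ is a family (there are $\lfloor 6/2\rfloor-1=2$ members, the $C_6$ with a "short" chord and the $C_6$ with a "long" chord), so one must handle both, and the triangular-lattice-like bulk of $T_k$ does contain many $6$-cycles — the entire point is that none of them acquires a chord once we are careful about the modified faces. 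I would isolate this as a lemma: "the triangular grid patch arising in $T_k$, together with the prescribed gadgets, contains no member of $\Theta_6$," proved by a localized case analysis on where the two hub vertices of a hypothetical $\Theta_6$ sit relative to the $O(1)$ special vertices.
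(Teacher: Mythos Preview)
Your approach has a fatal gap at the structural step: the assertion that ``the degree-$6$ part is locally like the triangular lattice which one checks has no $\Theta_6$ avoiding the boundary'' is simply false. The triangular lattice is saturated with $\Theta_6$'s. For instance, using coordinates with $(i,j)$ adjacent to $(i\pm1,j)$, $(i,j\pm1)$, $(i+1,j+1)$, $(i-1,j-1)$, the $6$-cycle
\[
(0,0),\ (1,0),\ (2,1),\ (1,1),\ (0,1),\ (-1,0)
\]
has the chord $(0,0)(1,1)$ (and also $(1,0)(1,1)$), so it realises both members of $\Theta_6$. Consequently, in $T_k$ the $\Theta_6$'s are everywhere in the bulk, not concentrated near the three red edges, and no modification confined to an $O(1)$ neighbourhood of $v_1,\dots,v_6$ can kill them all. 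Your reverse-engineered arithmetic, where only a bounded number of gadget vertices are added, therefore cannot yield a $\Theta_6$-free graph.

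The paper's construction is far more drastic and global. First, \emph{every} edge of $T_k$ outside $E^*=\{v_1v_2,v_3v_4,v_5v_6\}$ is subdivided once (this alone destroys all short cycles through the bulk), and each edge of $E^*$ is replaced by a small ``diamond holder'' gadget; in the resulting base graph every original vertex of $T_k$ has degree exactly $6$. Second, each such degree-$6$ vertex is blown up into a ``snowflake'': a hexagon with a copy of $K_5^-$ glued along each side, the outer tip of each $K_5^-$ being the former neighbour (a subdivision vertex or a diamond-holder vertex). The final graph has $n=21k+12$ vertices and $54k+24$ edges, which equals $\tfrac{18}{7}n-\tfrac{48}{7}$. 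The $\Theta_6$-freeness argument is then local and easy: every face is a triangle or a hexagon, and every hexagonal face is bordered by edges of $K_5^-$'s (or a $K_5^{--}$ in the diamond holder) in a way that forbids any chord. The role of $T_k$ is only to provide a $6$-regular skeleton so that the snowflakes fit together; its own cycle structure is completely obliterated by the subdivision step.
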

\begin{proof}
We use the triangulation $T_k$ which is obtained from Lemma \ref{gfr} to prove the theorem. See Figure \ref{ccf} for the case of $k=15$. Let $E^*$ denote the set of edges $\{v_1v_2,v_3v_4,v_5v_6\}$ as stated in the lemma. For the example in Figure \ref{ccf}, $E^*$ is the set of the red edges. We construct the base graph $G_k$ (with $4k+12$ vertices) from $T_k$ with the following procedures. 
\begin{enumerate}
\item Subdivide all edges in $E(T_k)\backslash E^*$. Notice that since $T_k$ is a maximal plane graph with $k$ vertices, then $e(T_k)=3k-6$. Thus, the number of subdividing vertices is $3k-9$.
\item  Replace all edges in $E^*$ with the \say{diamond holder} shown in Figure~\ref{jjn}. 
\begin{figure}[ht]
\centering 
\begin{tikzpicture}[scale=0.13]
\draw[thick,blue](10,-28)--(0,-18)--(-10,-28)--(0,-38)--(10,-28);
\draw[thick,blue](5,-28)--(0,-23)--(-5,-28)--(0,-33)--(5,-28);
\draw[thick,blue](0,-18)--(0,-23)(0,-38)--(0,-33)(0,-28)--(5,-28)(0,-28)--(-5,-28)(0,-28)--(0,-23)(0,-28)--(0,-33);
\draw[thick,black](-25,-38)--(-25,-18);
\draw[fill=black](-25,-38)circle(20pt);
\draw[fill=black](-25,-18)circle(20pt);
\draw[fill=black](10,-28)circle(15pt);
\draw[fill=black](-10,-28)circle(15pt);
\draw[fill=black](0,-18)circle(20pt);
\draw[fill=black](0,-38)circle(20pt);
\draw[fill=black](0,-23)circle(15pt);
\draw[fill=black](0,-33)circle(15pt);
\draw[fill=black](5,-28)circle(15pt);
\draw[fill=black](-5,-28)circle(15pt);
\draw[fill=black](0,-28)circle(15pt);
\node at (-18,-28) {$\Longrightarrow$};
\end{tikzpicture}
\caption{Replacing an edge in $E^*$ with a diamond holder}
\label{jjn}
\end{figure}
    
Denote the newly obtained plane graph as $G_k$. The case of $G_{15}$ is shown in Figure \ref{jkl}. Notice that all the $k$ vertices of the $T_k$ in the $G_k$ are degree 6 vertices (including the six vertices that only had degree 4 in $T_k$). For instance,  all the red vertices in $G_{15}$, see Figure \ref{jkl}, are the 15 vertices of $T_{15}$ and each of this vertex is of degree 6 in $G_{15}$. We can consider all the $k$ vertices of $T_k$  as centre of the star $K_{1,6}$ in $G_k$. 
\end{enumerate}

Next, we construct $G$ using the base graph $G_k$ as follows. Replace each star, $K_{1,6}$, with a \say{snow flake} shown in Figure \ref{jjm}, where the central vertex of the star is replaced by a hexagon and the edges are replaced by $K_5^{-}$'s. 
\begin{figure}[ht]
\centering 
\begin{tikzpicture}[scale=0.1]
\draw[thick](10,0)--(8.7,5)--(5,8.7)--(0,10)--(-5,8.7)--(-8.7,5)--(-10,0)(-5,-8.7)--(0,-10)--(5,-8.7);
\draw[thick](-10,0)--(-8.7,-5)--(-5,-8.7);
\draw[thick](10,0)--(8.7,-5)--(5,-8.7);
\draw[thick](5,8.7)--(11.3,6.5)--(10,0)--(13.9,8)--(5,8.7)(8.7,5)--(13.9,8);
\draw[thick](10,0)--(5,8.7)--(-5,8.7)--(-10,0)--(-5,-8.7)--(5,-8.7)--(10,0);
\draw[thick](-5,8.7)--(-11.3,6.5)--(-10,0)--(-13.9,8)--(-5,8.7)(-8.7,5)--(-13.9,8);
\draw[thick](-5,-8.7)--(-11.3,-6.5)--(-10,0)--(-13.9,-8)--(-5,-8.7)(-8.7,-5)--(-13.9,-8);
\draw[thick](5,-8.7)--(11.3,-6.5)--(10,0)--(13.9,-8)--(5,-8.7)(8.7,-5)--(13.9,-8);
\draw[thick](5,8.7)--(0,13)--(-5,8.7)--(0,16)--(5,8.7)(0,10)--(0,16);
\draw[thick](-5,-8.7)--(0,-13)--(5,-8.7)--(0,-16)--(-5,-8.7)(0,-10)--(0,-16);
\draw[thick,black](-40,0)--(-40,-16)(-40,0)--(-40,16)(-40,0)--(-53.9,-8)(-40,0)--(-53.9,8)(-40,0)--(-27,-8)(-40,0)--(-27,8);
\draw[fill=black](-40,-16)circle(30pt);
\draw[fill=black](-40,16)circle(30pt);
\draw[fill=black](-53.9,-8)circle(30pt);
\draw[fill=black](-27,-8)circle(30pt);
\draw[fill=black](-27,8)circle(30pt);
\draw[fill=black](-53.9,8)circle(30pt);
\draw[fill=red](-40,0)circle(30pt);
\draw[fill=black](10,0)circle(12pt);
\draw[fill=black](-10,0)circle(12pt);
\draw[fill=black](0,10)circle(15pt);
\draw[fill=black](0,13)circle(15pt);
\draw[fill=black](0,16)circle(25pt);
\draw[fill=black](0,-10)circle(15pt);
\draw[fill=black](0,-13)circle(15pt);
\draw[fill=black](0,-16)circle(30pt);
\draw[fill=black](8.7,5)circle(15pt);
\draw[fill=black](11.3,6.5)circle(15pt);
\draw[fill=black](-11.3,-6.5)circle(15pt);
\draw[fill=black](11.3,-6.5)circle(15pt);
\draw[fill=black](-13.9,-8)circle(30pt);
\draw[fill=black](13.9,-8)circle(30pt);
\draw[fill=black](13.9,8)circle(30pt);
\draw[fill=black](-8.7,5)circle(15pt);
\draw[fill=black](-11.3,6.5)circle(15pt);
\draw[fill=black](-13.9,8)circle(30pt);
\draw[fill=black](5,8.7)circle(15pt);
\draw[fill=black](-5,8.7)circle(15pt);
\draw[fill=black](-5,-8.7)circle(15pt);
\draw[fill=black](5,-8.7)circle(15pt);
\draw[fill=black](-8.6,-5)circle(15pt);
\draw[fill=black](8.6,-5)circle(15pt);
\node at (-20,0) {$\Longrightarrow$};
\end{tikzpicture}
\caption{Replacing a $K_{1,6}$ with a snowflake}
\label{jjm}
\end{figure}

Let the graph we obtained be $G$ and containing $n$ vertices. Notice that $G$ contains $k$ snowflakes and $3$ diamond holders. 

Now we count the number of vertices of $G$ in terms of $k$. Notice the number of vertices of a snowflake except the tip vertices of the six $K_5^{-}$ is 18. Thus, $G$ has $18k$ such vertices. The remaining vertices of $G$ are the subdividing vertices, which is $3k-9$, and $21$ vertices of the three diamond holders. Thus, $n=18k+(3k-9)+21=21k+12$. This implies, $k=\frac{n-12}{21}$.

Let us now compute the number of edges in $G$. It can be checked that each snowflake contains 54 edges. The remaining edges are the 8 edges that appear in the interior (except the two hanging edges) of each diamond holder. Thus, $e(G)=54k+24$.
Therefore, using the two results we get $e(G)=\frac{18}{7}n-\frac{48}{7}.$

Next we prove that $G$ is a $\Theta_6$-free graph. Observe that except the $3$-faces of the three diamond holders, each face of $G_k$ is of size $6$. Due to the subdivision of edges of $T_k$, all the  $6$-cycles of $G_k$ (except those $6$-cycles  containing an interior vertex of a diamond holder) are boundary of a $6$-face. Therefore, each such  $6$-cycle in $G_k$ is an induced cycle, i.e, no two non consecutive vertices of the cycle are adjacent. On the other hand, there are only two $6$-cycles that contains an interior vertex of a given diamond holder of $G_k$.  Moreover, it can be checked that the cycles are induced and  therefore, every $6$-cycles in $G_k$ are induced, i.e., $G_k$ is a $\Theta_6$-free graph. 

Notice that any $6$-cycle in $G$ either induce
a $6$-cycle of in $G_k$ or must be contained entirely within a $K_5^-$ incident to a snowflake. However, the later case can not happen, as it contains only $5$ vertices. Therefore, every $6$-cycle of $G$ is an induced cycle, and hence $G$ is $\Theta_6$-free graph.
\end{proof}

\begin{figure}[ht]
\centering  
\begin{tikzpicture}[scale=0.17]
\draw[thick](-6,20)--(0,26)--(6,20)--(0,14)--(-6,20)--(0,10)--(6,20);
\draw[thick](-6,-20)--(0,-26)--(6,-20)--(0,-14)--(-6,-20)--(0,-10)--(6,-20);
\draw[thick](-6,0)--(0,10)--(6,0)--(0,6)--(-6,0)(-6,0)--(0,-10)--(6,0)--(0,-6)--(-6,0);
\draw[thick](0,14)--(0,10)--(0,6)(0,-14)--(0,-10)--(0,-6);
\draw[thick,red](0,26)--(0,14)(0,6)--(0,-6)(0,-26)--(0,-14);
\draw[thick](-6,20)--(-6,0)--(-6,-20);
\draw[thick](6,20)--(6,0)--(6,-20);
\draw[black,thick](6,20)..controls (18,25) and (18,-25) .. (6,-20);
\draw[black,thick](-6,20)..controls (-18,25) and (-18,-25) .. (-6,-20);
\draw[black,thick](6,20)..controls (25,30) and (29,-30) .. (0,-30);
\draw[black,thick](-6,20)..controls (-25,30) and (-29,-30) .. (0,-30);
\draw[black,thick](0,26)..controls (35,35) and (35,-35) .. (0,-30);
\draw[thick,blue](0,23)--(3,20)--(0,17)--(-3,20)--(0,23)--(0,20)--(0,17)(-3,20)--(0,20)--(3,20)(0,26)--(0,23)(0,17)--(0,14);
\draw[thick,blue](0,-23)--(3,-20)--(0,-17)--(-3,-20)--(0,-23)--(0,-20)--(0,-17)(-3,-20)--(0,-20)--(3,-20)(0,-26)--(0,-23)(0,-17)--(0,-14);
\draw[thick,blue](-3,0)--(0,3)--(3,0)--(0,-3)--(-3,0)--(0,0)--(3,0)(0,-3)--(0,0)--(0,3)(0,6)--(0,3)(0,-6)--(0,-3);
\draw[thick,blue](-4.5,20)--(0,26)--(4.5,20)--(0,14)--(-4.5,20);
\draw[thick,blue](-4.5,-20)--(0,-26)--(4.5,-20)--(0,-14)--(-4.5,-20);
\draw[thick,blue](-4.5,0)--(0,6)--(4.5,0)--(0,-6)--(-4.5,0);
\draw[thick](6,-20)--(0,-30)--(-6,-20)(0,-26)--(0,-30);
\draw[fill=red](0,6)circle(15pt);
\draw[fill=red](0,10)circle(15pt);
\draw[fill=red](0,14)circle(15pt);
\draw[fill=red](0,26)circle(15pt);
\draw[fill=black](6,0)circle(12pt);
\draw[fill=red](6,20)circle(15pt);
\draw[fill=black](-6,0)circle(12pt);
\draw[fill=red](-6,20)circle(15pt);
\draw[fill=red](0,-6)circle(15pt);
\draw[fill=red](0,-10)circle(15pt);
\draw[fill=red](0,-14)circle(15pt);
\draw[fill=red](0,-26)circle(15pt);
\draw[fill=red](6,0)circle(15pt);
\draw[fill=red](6,-20)circle(15pt);
\draw[fill=red](-6,0)circle(15pt);
\draw[fill=red](-6,-20)circle(15pt);
\draw[fill=black](0,12)circle(12pt);
\draw[fill=black](0,8)circle(12pt);
\draw[fill=black](0,-12)circle(12pt);
\draw[fill=black](0,-8)circle(12pt);
\draw[fill=black](6,10)circle(12pt);
\draw[fill=black](6,-10)circle(12pt);
\draw[fill=black](-6,10)circle(12pt);
\draw[fill=black](-6,-10)circle(12pt);
\draw[fill=black](15,0)circle(12pt);
\draw[fill=black](-15,0)circle(12pt);
\draw[fill=black](21,0)circle(12pt);
\draw[fill=black](-21,0)circle(12pt);
\draw[fill=black](26.25,0)circle(12pt);
\draw[fill=black](-3,23)circle(12pt);
\draw[fill=black](-3,17)circle(12pt);
\draw[fill=black](-3,15)circle(12pt);
\draw[fill=black](-3,5)circle(12pt);
\draw[fill=black](-3,3)circle(12pt);
\draw[fill=black](-3,-23)circle(12pt);
\draw[fill=black](-3,-17)circle(12pt);
\draw[fill=black](-3,-15)circle(12pt);
\draw[fill=black](-3,-5)circle(12pt);
\draw[fill=black](-3,-3)circle(12pt);
\draw[fill=black](3,23)circle(12pt);
\draw[fill=black](3,17)circle(12pt);
\draw[fill=black](3,15)circle(12pt);
\draw[fill=black](3,5)circle(12pt);
\draw[fill=black](3,3)circle(12pt);
\draw[fill=black](3,-23)circle(12pt);
\draw[fill=black](3,-17)circle(12pt);
\draw[fill=black](3,-15)circle(12pt);
\draw[fill=black](3,-5)circle(12pt);
\draw[fill=black](3,-3)circle(12pt);
\draw[fill=black](0,23)circle(12pt);
\draw[fill=black](0,20)circle(12pt);
\draw[fill=black](0,17)circle(12pt);
\draw[fill=black](-3,20)circle(12pt);
\draw[fill=black](3,20)circle(12pt);
\draw[fill=black](0,-23)circle(12pt);
\draw[fill=black](0,-20)circle(12pt);
\draw[fill=black](0,-17)circle(12pt);
\draw[fill=black](-3,-20)circle(12pt);
\draw[fill=black](3,-20)circle(12pt);
\draw[fill=black](0,3)circle(12pt);
\draw[fill=black](0,-3)circle(12pt);
\draw[fill=black](3,0)circle(12pt);
\draw[fill=black](-3,0)circle(12pt);
\draw[fill=black](0,0)circle(12pt);
\draw[fill=black](-4.5,20)circle(12pt);
\draw[fill=black](4.5,20)circle(12pt);
\draw[fill=black](-4.5,-20)circle(12pt);
\draw[fill=black](4.5,-20)circle(12pt);
\draw[fill=black](-4.5,0)circle(12pt);
\draw[fill=black](4.5,0)circle(12pt);
\draw[fill=black](0,-28)circle(12pt);
\draw[fill=black](3,-15)circle(12pt);
\draw[fill=black](-3,-15)circle(12pt);
\draw[fill=black](3,-25)circle(12pt);
\draw[fill=black](-3,-25)circle(12pt);
\draw[fill=red](0,-30)circle(15pt);
\node at (0,-34) {$(a)$};
\end{tikzpicture}
\caption{Construction of $G_{15}$ from $T_{15}$.}
\label{jkl}
\end{figure}
\section{Notations and Preliminaries}
We use similar techniques as used in \cite{ghosh}.  We repeat some of the important notations and definitions, and for a more comprehensive discussion we refer the reader to the previous paper.
\begin{definition}\label{kkv}
Let $G$ be a plane graph and $e\in E(G)$.  If $e$ is not in a bounded $3$-face of $G$, then we call $e$ a \textbf{trivial triangular-block}.  Otherwise, we recursively construct a \textbf{triangular-block} in the following way.  Start with $H$ as a subgraph of $G$, such that $E(H)=\{e\}$.
\begin{enumerate}
\item Add the other edges of the $3$-face containing $e$ to $E(H)$.
\item Take $e'\in E(H)$ and search for a bounded $3$-face containing $e'$. Add these other edge(s) in this bounded $3$-face to $E(H)$.
\item Repeat step $2$ till we cannot find a bounded $3$-face for any edge in $E(H)$.
\end{enumerate}
 We denote the triangular-block obtained from $e$ as the starting edge, by $B(e)$.
\end{definition}
Let $G$ be a plane graph. We have the following two observations:
\begin{enumerate}
\item If $H$ is a non-trivial triangular-block and $e_1, e_2\in E(H)$, then $B(e_1)=B(e_2)=H$.
\item Any two triangular-blocks of $G$ are edge disjoint.
\end{enumerate}
Let $\mathcal{B}$ be the family of all triangular-blocks of $G$. From (2) of the observations above we have $$e(G)=\sum\limits_{B\in\mathcal{B}}e(B),$$ 
where $e(G)$ and $e(B)$ are number of edges of $G$ and $B$ respectively. For a triangular-block $B$ in a plane graph $G$, we may call $e(B)$ as the \emph{contribution of $B$ to the number of edges in $G$}.

Next we list out all possible triangular-blocks (together with their notations) that a given
$\Theta_6$-free plane graph may contain. 


We denote a trivial triangular-block as $B_2$, see Figure~\ref{hn2}. The number $2$ is to indicate that the triangular-block contains only $2$ vertices. Similarly in the notation of the other triangular-blocks, the number indicates the number of vertices the triangular-block contains.

It is obvious that there is only one $3$-vertex triangular-block, see Figure~\ref{hn2}. We denote the triangular-block by $B_3$.

A triangular-block of $4$ vertices is obtained from a triangular-block of $3$ vertices using (2) of Definition~\ref{kkv}. With this it can be checked that there are two possible triangular-blocks of $4$-vertices, see triangular-block $B_{4,}$ and triangular-block $B_{4,b}$ in Figure~\ref{hn2}.

A triangular-block of $5$ vertices is obtained from a triangular-block of $4$ vertices using (2) of Definition~\ref{kkv}. It can also be checked that we get $5$-vertex triangular-block either $B_{5,a}$ or $B_{5,b}$ or $B_{5,d}$ (see Figure~\ref{hn}) from the $4$-vertex triangular-block $B_{4,b}$. On the other hand, we get the $5$-vertex triangular-block either $B_{5,a}$ or $B_{5,c}$ (see Figure~\ref{hn}) from the triangular-block $B_{4,a}$. Therefor, there are only four $5$-vertex triangular-blocks. 

\subsubsection*{Triangular-blocks on $5$ vertices}
There are four types of blocks on $5$ vertices (See Figure \ref{hn}). Notice that $B_{5,a}$ is a $K_5^- $ .  
\begin{figure}[ht]
\centering
\begin{tikzpicture}[scale=0.13]
\draw[ultra thick](0,10)--(8.7,-5)--(-8.7,-5)--(0,10)(0,10)--(0,0)(-8.7,-5)--(0,5)--(8.7,-5)(-8.7,-5)--(0,0)--(8.7,-5);
\draw[fill=black](0,10)circle(25pt);
\draw[fill=black](0,0)circle(25pt);
\draw[fill=black](0,5)circle(25pt);
\draw[fill=black](8.7,-5)circle(25pt);
\draw[fill=black](-8.7,-5)circle(25pt);
\node at (0,-8) {$B_{5,a}$};
\end{tikzpicture}\qquad\qquad
\begin{tikzpicture}[scale=0.2]
\draw[fill=black](-4,-4)circle(13pt);
\draw[fill=black](4,-4)circle(13pt);
\draw[fill=black](4,4)circle(13pt);
\draw[fill=black](-4,4)circle(13pt);
\draw[fill=black](0,0)circle(13pt);
\draw[ultra thick](-4,-4)--(4,-4)--(4,4)--(-4,4)--(0,0)--(4,4)(0,0)--(4,-4)
(0,0)--(-4,4)(0,0)--(-4,-4)(-4,4)--(-4,-4);
\node at (0,-7){$B_{5,b}$};
\end{tikzpicture}\qquad\qquad
\begin{tikzpicture}[scale=0.23]
\draw[fill=black](2,0)circle(11pt);
\draw[fill=black](0,-4)circle(11pt);
\draw[fill=black](0,4)circle(11pt);
\draw[fill=black](4,0)circle(11pt);
\draw[fill=black](-4,0)circle(11pt);
\draw[ultra thick](-4,0)--(0,4)--(4,0)--(0,-4)--(-4,0);
\draw[ultra thick](2,0)--(0,4)(0,-4)--(0,4)(2,0)--(0,-4)(2,0)--(4,0);
\node at (0,-7){$B_{5,c}$};
\end{tikzpicture}\qquad\qquad
\begin{tikzpicture}[scale=0.1]
\draw[ultra thick](0,10)--(9.5,3.1)--(5.9,-8.1)--(-5.9,-8.1)--(-9.5,3.1)--(0,10)(0,10)--(5.9,-8.1)(0,10)--(-5.9,-8.1);
\draw[fill=black](0,10)circle(30pt);
\draw[fill=black](9.5,3.1)circle(30pt);
\draw[fill=black](-9.5,3.1)circle(30pt);
\draw[fill=black](5.9,-8.1)circle(30pt);
\draw[fill=black](-5.9,-8.1)circle(30pt);
\node at (0,-13) {$B_{5,d}$};
\end{tikzpicture}
\caption{Triangular-blocks with $5$ vertices}
\label{hn}
\end{figure}
\subsubsection*{Triangular-blocks on $2$, $3$  and $4$ vertices}
The $2$-vertex and $3$-vertex triangular-blocks are simply $K_2$ (trivial triangular-block) and $K_3$ (triangle) respectively. There are two triangular-blocks on $4$ vertices (see the last two graphs in Figure \ref{hn2}). Observe that $B_{4,a}$ is a $K_4$.
\begin{figure}[ht]
\centering
\begin{tikzpicture}[scale=0.18]
\draw[fill=black](-6,-4)circle(14pt);
\draw[fill=black](6,-4)circle(14pt);
\draw[ultra thick](6,-4)--(-6,-4);
\node at (0,-7){$B_2$};
\end{tikzpicture}\qquad\qquad 
\begin{tikzpicture}[scale=0.18]
\draw[fill=black](0,5)circle(15pt);
\draw[fill=black](-6,-4)circle(15pt);
\draw[fill=black](6,-4)circle(15pt);
\draw[ultra thick](6,-4)--(-6,-4)(-6,-4)(6,-4)--(0,5)--(-6,-4);
\node at (0,-7){$B_3$};
\end{tikzpicture}\qquad\qquad   
\begin{tikzpicture}[scale=0.21]
\draw[fill=black](0,0)circle(13pt);
\draw[fill=black](0,5)circle(13pt);
\draw[fill=black](-5,-4)circle(13pt);
\draw[fill=black](5,-4)circle(13pt);
\draw[ultra thick](0,0)--(0,5)(5,-4)--(-5,-4)(-5,-4)--(0,0)--(5,-4)--(0,5)--(-5,-4);
\node at (0,-7){$B_{4,a}$};
\end{tikzpicture}\qquad\qquad
\begin{tikzpicture}[scale=0.28]
\draw[fill=black](-3,0)circle(9pt);
\draw[fill=black](0,3)circle(9pt);
\draw[fill=black](3,0)circle(9pt);
\draw[fill=black](0,-3)circle(9pt);
\draw[ultra thick](-3,0)--(0,3)--(3,0)--(0,-3)--(-3,0)(-3,0)--(3,0);
\node at (0,-7){$B_{4,b}$};
\end{tikzpicture}
\caption{Triangular-blocks with 2, 3 and 4 vertices}
\label{hn2}
\end{figure}

\begin{lemma}
Let $G$ be a $\Theta_6$-free plane graph. Then $G$ contains no $k$-vertex triangular-block for all $k\geq 6$.
\end{lemma}
\begin{proof}
Following the recursive definition of a triangular-block, showing that a triangular-block on $6$ vertices contains a $\Theta_6$ is enough to complete proof of the lemma. Recall that a $6$-vertex triangular-block is obtained from a $5$-vertex triangular-block. Recall that there are four $5$-vertex triangular-blocks. 

We only see the case that the $5$-vertex triangular-block is $B_{5,b}$ and similar argument can be given for the remaining $5$-vertex triangular-blocks. Suppose that a $6$-vertex triangular-block $B$ is obtained from a triangular-block $B_{5,b}$. Let the outer $4$-cycle of the $B_{5,b}$ be $v_1v_2v_3v_4v_1$ and let its inner vertex be $v_5$. Denote the vertex in $B$ but not in the $B_{5,b}$ by $v_6$. From Definition~\ref{kkv} (2), $v_6$ must be adjacent to both end vertices of an edge in $\{v_1v_2, v_2v_3, v_3v_4, v_4v_1\}$ and form a $3$-face in $B$. Without loss of generality assume  $v_6v_1v_2$ be a $3$-face in $B$. In this case we get a non induced $6$-cycle $v_1v_5v_4v_3v_2v_6v_1$. Therefore $B$ contains a $\Theta_6$, which is a contradiction.   
\end{proof}

\begin{definition}
Let $G$ be a plane graph. 
 A vertex $v$ in $G$ is called a \textbf{junction vertex} if it is shared by at least two triangular-blocks of $G$. 
 \end{definition}
\begin{definition}
 Let $B$ be a triangular-block in a plane graph $G$. The \textbf{contribution of $B$ to the  number of vertices in $G$}, denoted by $\hat{v}(B)$, is defined as 
$$\hat{v}(B)=\sum\limits_{v\in V(B)}\frac{1}{\mbox{\# triangular-blocks sharing $v$}}.$$
 \end{definition}
For a plane graph $G$ and $\mathcal{B}$, the set of all triangular-blocks of $G$, one can see that $$v(G)=\sum\limits_{B\in \mathcal{B}}\hat{v}(B).$$


\begin{definition} Let $G$ be a 2-connected plane graph containing at least 2 triangular-blocks. Let $B$ be a triangular-block in $G$. An edge of $B$ is called an \textbf{exterior edge}, if it is on a boundary of non triangular face of $G$. Otherwise, we call it as an \textbf{interior edge}. A path $P$ in $B$ given by $x_1x_2\dots x_m$, where $x_1$ and $x_m$ are the only junction vertices in $P$ and $x_ix_{i+1}$, is an exterior edge for all $i\in\{1,2,3,\dots,m-1 \}$ is called an \textbf{exterior path} in $B$. A non triangular face in $G$ to which $P$ is incident with is called an \textbf{exterior face} of $B$ with the exterior path $P$.
\end{definition}

Let $G$ be a 2-connected $\Theta_6$-free plane graph containing at least two triangular-blocks. Next, we define the \emph{contribution of a triangular-block $B$ in $G$ to the number of faces of $G$}.

Let $\phi$ be a non-triangular face in $G$ with boundary cycle $x_1x_2x_3 \dots x_m x_1$, where $m\geq 4$. We may denote the cycle by $\phi$.  Consider the cycle $\phi'$ which is obtained by deleting all the vertices and edges in $G$ except vertices and edges of $\phi$. 

We construct a cycle $\phi''$, whose size is at most $m$, from $\phi'$ in the following way. For each cherry $x_ix_jx_k$ of the cycle $\phi'$, an exterior path for some triangular-block $B$ of $G$ and $x_ix_k\in E(B)$, delete $x_j$ and join $x_i$ and $x_k$ with an edge. We call such a cherry as a \textit{bad cherry}, and the cycle $\phi''$ as the \textit{refinement} of $\phi$.

For instance, consider a plane graph $G$ shown in the Figure \ref{wsde} (left). $G$ has 10 triangular-blocks; five $B_2$, one $B_3$, $B_{4,a}$, $B_{4,b}$, $B_{5,b}$ and $B_{5,c}$. Consider the non-triangular face, say $\phi$, with the boundary cycle $x_1x_2x_3x_4x_5x_6x_7x_8x_9x_{10}x_{11}x_{12}x_{13}x_1$ (see the boundary of the shaded region). Except two trivial triangular-blocks, all the remaining triangular-blocks are incident to $\phi$. Related to $\phi$, the bad cherries are $x_7x_6x_5$ and $x_9x_{10}x_{11}$. However, the cherry $x_{11}x_{12}x_{13}$ is not a bad cherry though it is an exterior path of the $B_{4,b}$; as $x_{11}x_{13}\notin E(B_{4,b})$. It can be seen that the refinement $\phi''$ of $\phi$ is  $x_1x_2x_3x_4x_5x_7x_8x_9x_{11}x_{12}x_{13}x_1$, which is of size $11$.
\begin{figure}[ht]
\centering
\begin{tikzpicture}[scale=0.22]
\filldraw[black, fill opacity=0.1] plot coordinates {(10,0) (6,0) (4.24,4.24)(7.1,7.1)(0,10)(-1.74,4)(-7.1,7.1)(-10,0)(-7.1,-7.1)(-1.74,-4)(0,-10)(2.56,-6)(7.1,-7.1)(10,0)} ;
\draw[thick](10,0)--(7.1,7.1)--(0,10)--(-7.1,7.1)--(-10,0)--(-7.1,-7.1)--(0,-10)--(7.1,-7.1)--(10,0);
\draw[thick](0,-10)--(2.56,-6)--(7.1,-7.1)--(4.64,-11.1)--(0,-10)(2.56,-6)--(3.6,-8.55);
\draw[thick](0,-10)--(-2.56,-6)--(-7.1,-7.1)--(-1.74,-4)--(0,-10)(-2.56,-6)--(-1.74,-4);
\draw[thick](10,0)--(6,0)--(4.24,4.24)--(7.1,7.1)--(6.47,2.68)--(6,0)(10,0)--(6.47,2.68)--(4.24,4.24);
\draw[thick](0,10)--(-2.56,6)--(-7.1,7.1)--(-1.74,4)--(0,10)(-2.56,6)--(-1.74,4)(0,10)--(-4.64,11.1)--(-7.1,7.1);
\draw[thick](-10,0)--(-12,-5)--(-7.1,-7.1);
\draw[fill=black](10,0)circle(10pt);
\draw[fill=black](-10,0)circle(10pt);
\draw[fill=black](0,10)circle(10pt);
\draw[fill=black](0,-10)circle(10pt);
\draw[fill=black](7.1,7.1)circle(10pt);
\draw[fill=black](-7.1,7.1)circle(10pt);
\draw[fill=black](-7.1,-7.1)circle(10pt);
\draw[fill=black](7.1,-7.1)circle(10pt);
\draw[fill=black](2.56,-6)circle(10pt);
\draw[fill=black](3.6,-8.55)circle(10pt);
\draw[fill=black](4.64,-11.1)circle(10pt);
\draw[fill=black](-2.56,-6)circle(10pt);
\draw[fill=black](-1.74,-4)circle(10pt);
\draw[fill=black](-2.56,6)circle(10pt);
\draw[fill=black](-4.64,11.1)circle(10pt);
\draw[fill=black](-1.74,4)circle(10pt);
\draw[fill=black](6,0)circle(10pt);
\draw[fill=black](4.24,4.24)circle(10pt);
\draw[fill=black](6.47,2.68)circle(10pt);
\draw[fill=black](-12,-5)circle(10pt);
\node at (0,0){$\phi$};
\node at (12,0){$x_1$};
\node at (6,-1){$x_2$};
\node at (3.5,3.5){$x_3$};
\node at (8,8){$x_4$};
\node at (1,11){$x_5$};
\node at (-2.56,3){$x_6$};
\node at (-8,8){$x_7$};
\node at (-12,0){$x_8$};
\node at (-2.56,-3){$x_{10}$};
\node at (-8,-8){$x_{9}$};
\node at (2.56,-5){$x_{12}$};
\node at (-1,-11){$x_{11}$};
\node at (8.5,-8.5){$x_{13}$};
\node at (14,-2){$\Longrightarrow$};
\end{tikzpicture}
\begin{tikzpicture}[scale=0.22]
\draw[black, fill opacity=0.15] plot coordinates {(10,0) (6,0) (4.24,4.24)(7.1,7.1)(0,10)(-7.1,7.1)(-10,0)(-7.1,-7.1)(0,-10)(2.56,-6)(7.1,-7.1)(10,0)} ;
\draw[fill=black](10,0)circle(10pt);
\draw[fill=black](6,0)circle(10pt);
\draw[fill=black](4.24,4.24)circle(10pt);
\draw[fill=black](7.1,7.1)circle(10pt);
\draw[fill=black](0,10)circle(10pt);
\draw[fill=black](-7.1,7.1)circle(10pt);
\draw[fill=black](-10,0)circle(10pt);
\draw[fill=black](-7.1,-7.1)circle(10pt);
\draw[fill=black](0,-10)circle(10pt);
\draw[fill=black](2.56,-6)circle(10pt);
\draw[fill=black](7.1,-7.1)circle(10pt);
\node at (0,0){$\phi''$};
\node at (12,0){$x_1$};
\node at (6,-1){$x_2$};
\node at (3.5,3.5){$x_3$};
\node at (8,8){$x_4$};
\node at (1,11){$x_5$};
\node at (-8,8){$x_7$};
\node at (-12,0){$x_8$};
\node at (-8,-8){$x_{9}$};
\node at (2.56,-5){$x_{12}$};
\node at (-1,-11){$x_{11}$};
\node at (8.5,-8.5){$x_{13}$};
\end{tikzpicture}
\caption{An example showing how to compute the size of the refinement of a non-triangular face in a plane graph.}
\label{wsde}
\end{figure}
\begin{definition} Let $G$ be a 2-connected $\Theta_6$-free plane graph containing at least two triangular-blocks and $\delta(G)\geq 3$. Let $B$ be a triangular-block and $\phi_1,\phi_2,\dots, \phi_m$ be all the exterior faces incident to $B$. Consider an exterior face $\phi_i$ of $B$ for some $i\in\{1,2,\dots,m\}$ with an exterior path $P$ of the triangular-block. We define the \textbf{contribution of $B$ to the face size of $\phi_i$}, denoted by $f_{\phi_i}(B)$, as follows.
\begin{enumerate}
\item If $P$ is not a bad cherry, $$f_{\phi_i}(B)=\frac{\mbox{length of $P$}}{\mbox{length of $\phi_i''$}}.$$
\item If $P$ is a bad cherry, $$f_{\phi_i}(B)=\frac{1}{\mbox{length of $\phi_i''$}}.$$
\end{enumerate}
Where $\phi_i''$ is the refinement of $\phi_i$.

The \textbf{contribution of the triangular-block $B$ to the number of faces of the graph $G$}, denoted as $\hat{f}(B)$, is defined as: 
$$\hat{f}(B)=\sum\limits_{i=1}^{m}f_{\phi_i}(B)+\left(\mbox{\# triangular faces in $B$}\right).$$
\end{definition}

Let $G$ be a 2-connected $\Theta_6$-free plane graph containing at least two triangular-blocks and $\delta(G)\geq 3$. For $\mathcal{B} $ be the family of triangular-blocks in $G$ we have,  $f(G)=\sum\limits_{B\in\mathcal{B}}\hat{f}(B)$.  

\section{Proof of Theorem \ref{vcb}.}
We begin by outlining our proof. Consider a plane drawing of $G$. Let $\mathcal{B}$ be the family of all triangular-blocks of $G$. The main target of the proof is to show that 
\begin{align}
24f(G)-17e(G)+6v(G)\leq 0\label{eq}.
\end{align}
where $v(G)$ is number of vertices of in $G$ (in this case $n$).

Once we prove (\ref{eq}), then using the Euler's Formula, $e(G)=f(G)+v(G)-2$, we can finish proof of the theorem.

To prove (\ref{eq}), we show the existence of a partition $\{\mathcal{P}_1, \mathcal{P}_2,\dots,\mathcal{P}_m\}$ of $\mathcal{B}$ such that 

$$24\sum\limits_{B\in \mathcal{P}_i}\hat{f}(B)-17\sum\limits_{B\in \mathcal{P}_i}e(B)+6\sum\limits_{B\in\mathcal{P}_i}\hat{v}(B)\leq 0,\ \mbox{for all}\  i\in\{1,2,3\dots,m\}.$$ 

Since $f(G)=\sum\limits_{B\in \mathcal{B}}\hat{f}(B)$, $v(G)=\sum\limits_{B\in\mathcal{B}}\hat{v}(B)$ and $e(G)=\sum\limits_{B\in \mathcal{B}}e(B)$ we have, 
\begin{align*}
24f(G)-17e(G)+6v(G)&=24\sum\limits_{i=1}^{m}\sum\limits_{B\in\mathcal{P}_i}\hat{f}(B)-17\sum\limits_{i=1}^{m}\sum\limits_{B\in\mathcal{P}_i}e(B)+6\sum\limits_{i=1}^{m}\sum\limits_{B\in\mathcal{P}_i}\hat{v}(B)\\&=\sum\limits_{i=1}^{m}\bigg(24\sum\limits_{B\in\mathcal{P}_i}\hat{f}(B)-17\sum\limits_{B\in\mathcal{P}_i}e(B)+6\sum\limits_{B\in\mathcal{P}_i}\hat{v}(B)\bigg)\leq 0.
\end{align*}
To verify the existence of such a partition of triangular-blocks of $G$, we prove a sequence of claims about an upper bound of $24\hat{f}(B)-17e(B)+6\hat{v}(B)$ for each type of triangular-block $B$ which may possibly exist in $G$.

We give the arguments in the proof of Claim~\ref{aau1}, Claim~\ref{km} and Claim~\ref{gm} in detail. To avoid redundancy of arguments, we might skip the detail in the proof for the remaining claims.

For simplicity of arguments, we define a function $g:\mathcal{B}\longrightarrow \mathbb{R}$ as:  $$g(B):=24\hat{f}(B)-17e(B)+6\hat{v}(B).$$

\begin{claim}\label{aau1}
Let $B$ be a $B_{5,a}$ triangular-block in $G$. Then $g(B)\leq 0$.
\end{claim}
\begin{proof}
Let the exterior vertices of $B$ be labeled as $x_1,x_2$ and $x_3$ as shown in Figure \ref{b5a}.

Observe that for any exterior vertex pair $x_i$ and $x_j$ of $B$, an $(x_i,x_j)$-path with all its interior vertices not in $B$ must be of length at least 5. Otherwise, it is easy to find a non-induced $6$-cycle using vertices of the path and the triangular-block. For simplicity we might call an $(x_i,x_j)$-path with all its interior vertices not in $B$ as  \textit{shorter path of $B$} if its length is at least 2 and at most 4.

Since the graph is $2$-connected and $n\geq 6$, $B$ contains at least $2$ junction vertices. We distinguish two cases based on the number of junction vertices of $B$.
\begin{figure}[ht]
\centering
\begin{tikzpicture}[scale=0.13]
\draw[ultra thick](0,10)--(8.7,-5)--(-8.7,-5)--(0,10)(0,10)--(0,0)(-8.7,-5)--(0,5)--(8.7,-5)(-8.7,-5)--(0,0)--(8.7,-5);
\draw[fill=black](0,10)circle(20pt);
\draw[fill=black](0,0)circle(20pt);
\draw[fill=black](0,5)circle(20pt);
\draw[fill=black](8.7,-5)circle(20pt);
\draw[fill=black](-8.7,-5)circle(20pt);
\node at (-11.5,-4.5){$x_1$};
\node at (11.5,-4.5){$x_3$};
\node at (0,12.5){$x_2$};
\end{tikzpicture}
\caption{$B_{5,a}$ triangular-block}
\label{b5a}
\end{figure}
\begin{enumerate}
\item \textbf{$B$ contains exactly two junction vertices}

We do for the case when $x_2$ and $x_3$ are the junction vertices, and similar argument can be given for other pairs too. 

Let the exterior faces of the exterior edge $x_2x_3$ and the exterior path $x_2x_1x_3$ be respectively $\phi_1$ and $\phi_2$. Since there is no shorter $(x_2,x_3)$-path, the size of $\phi_1$ (and hence $\phi_1''$) is $6$. On the other hand since $x_1$ is not a junction vertex and $B$ has no shorter $(x_2,x_3)$-path,  the size of $\phi_2$ is at least $7$. Since $x_2x_1x_3$ is a bad cherry, the refinement $\phi_2''$ has size at least 6. 
Since the number of triangular faces in $B$ is $5$, we have $\hat{f}(B)\leq 5+1/6+1/6$. $\hat{v}(B)$ is maximum if we assume each junction vertex of the triangular-block is shared by two triangular-blocks. Thus, $\hat{v}(B)\leq 3+1/2+1/2$.  From the fact that $e(B)=9$, we get $g(B)\leq -1$.
    
\item \textbf{$B$ contains three junction vertices}

Since $B$ has three junction vertices, to get a maximum upper bound of $\hat{v}(B)$ we may assume that each junction vertex is shared by two triangular-blocks and we get $\hat{v}(B)\leq 2+3/2.$ Let $\phi_1, \phi_2$ and $\phi_3$ be the exterior face of $x_1x_2, x_2x_3$ and $x_3x_1$ respectively. It can be seen that each of the face and refinement has size at least 6. Thus, $\hat{f}(B)\leq 5+3/6$. Therefore, using $e(B)=9$ we get $g(B)\leq 0.$ 

Next we give our reason why $\phi_1''$ has size at least $6$, and similar reasons can be given why $\phi_2''$ and $\phi_3''$ have size at least 6 too. 

Suppose that $\phi_1$ is a $3$-face. Let $x_1y_1x_2$ be the boundary of the face. Since $x_3$ is a junction vertex, $y_1$ and $x_3$ are different vertices, but this implies we have a shorter $(x_1,x_2)$-path, namely $x_1y_1x_2$, which is a contradiction.

Suppose that $\phi_1$ is a $4$-face and let $x_1y_1y_2x_2$ be the boundary of $\phi_1$. Since the vertex $x_3$ is a junction, $y_1$ and $y_2$ are different from $x_3$. This results the path $x_1y_1y_2x_2$ is a short $(x_1,x_2)$-path of $B$, and this is a contradiction. 

Suppose $\phi_1$ is a $5$-face and let $x_1y_1y_2y_3x_2$ be the boundary of $\phi_1$. For the same reason given above the vertices $y_1$ and $y_3$ are different from $x_3$. Since $B$ has no shorter $(x_1,x_2)$-path, we may assume $y_2$ and $x_3$ are identical vertices. However in this case, we have a shorter $(x_1,x_3)$-path, namely $x_1y_1x_3$, and this is a contradiction. \qedhere
\end{enumerate}
\end{proof}
\begin{claim}\label{km}
Let $B$ be a $B_{5,b}$ triangular-block in $G$. Then $g(B)\leq 0$.
\end{claim}
\begin{proof}
Denote the four exterior vertices the triangular-block by $x_1,x_2,x_3, x_4$ and the interior vertex of the triangular-block by $x_5$ as shown in Figure~\ref{cdc}.

Observe that for any exterior vertex pair $x_i$ and $x_j$ of $B$, an $(x_i,x_j)$-path with all its interior vertices not in $B$ must be of length at least 5. Otherwise, it is easy to find a non-induced $6$-cycle using vertices of the path and the triangular-block. For simplicity we might call an $(x_i,x_j)$-path with all its interior vertices not in $B$ as  \textit{shorter path of $B$} if its length is at least 2 and at most 4.   
\begin{figure}[ht]
\centering
\begin{tikzpicture}[scale=0.2]
\draw[fill=black](-4,-4)circle(13pt);
\draw[fill=black](4,-4)circle(13pt);
\draw[fill=black](4,4)circle(13pt);
\draw[fill=black](-4,4)circle(13pt);
\draw[fill=black](0,0)circle(13pt);
\draw[ultra thick](-4,-4)--(4,-4)--(4,4)--(-4,4)--(0,0)--(4,4)(0,0)--(4,-4)
(0,0)--(-4,4)(0,0)--(-4,-4)(-4,4)--(-4,-4);
\node at (-4,-5.5){$x_1$};
\node at (-4,5.5){$x_2$};
\node at (4,5.5){$x_3$};
\node at (4,-5.5){$x_4$};
\node at (0,-2){$x_5$};
\end{tikzpicture}
\caption{$B_{5,b}$ triangular-block}
\label{cdc}
\end{figure}

Since $G$ contains no cut vertex, the number of junction vertices of $B$ is at least $2$. Next we distinguish three subcases to address an upper bound of $g(B)$.
\begin{enumerate}
\item \textbf{$B$ has only two junction vertices}

In this case $\hat{v}(B)\leq 3+1/2+1/2.$ 
Without loss of generality we may assume that the the junction vertices are either $x_1$ and $x_2$ or $x_1$ and $x_3$. 

Let the junction vertices be $x_1$ and $x_2$. Since $B$ has no shorter $(x_1,x_2)$-path, the exterior faces of the exterior paths $x_1x_2$ and $x_1x_4x_4x_2$ are at least $6$ and $8$ respectively. Even more, the length the refinement of the two faces are at least $6$ and $8$ respectively. Hence $\hat{f}(B)\leq 4+1/6+3/8$. Using $e(B)=8$, we get $g(B)\leq -3.$ 

On the other hand if the junction vertices are $x_1$ and $x_3$, then the refinements of the exterior faces of $x_1x_2x_3$ and $x_2x_4x_3$ have size at least $6$. Hence, $\hat{f}(B)\leq 4+4/6.$  From the fact that $e(B)=8,$ we get $g(B)\leq 0.$  

\item \textbf{$B$ has only three junction vertices}

Since $B$ has three junction vertices, $\hat{v}(B)\leq 2+1/2+1/2+1/2.$  Without loss of generality assume that the junction vertices are $x_1, x_2$ and $x_3$. Denote the exterior face of $x_1x_2, x_2x_3$ and $x_1x_4x_3$ be respectively $\phi_1, \phi_2$ and $\phi_3$ respectively. 

It can be seen that size of $\phi_1$ $\phi_2$, and $\phi_3$ (and hence their refinement) are at least 6. Thus $\hat{f}(B)\leq 4+4/6.$, using $e(B)=8$, we have $g(B)\leq -3. $

Next we give the reason why the exterior faces have size of at least 6. Let the size of $\phi_1$ be 3 and  and is $x_1y_1x_2$. It can be checked that $y_1$ can never be $x_3$ and hence $B$ contains a shorter $(x_1,x_2)$-path, which is a contradiction. 

Let the size of $\phi_1$ be $4$ and the face is $x_1y_1y_2x_2$. Since $x_2$ is a junction vertex, $y_2$ can not be $x_3$. If $y_1$ is different from $x_3$, then $B$ contains a shorter $(x_1,x_2)$-paths, namely $x_1y_1y_2x_2$, and hence a contradiction. If $y_1$ and $x_3$ identical, we still get a shorter $(x_2,x_3)$-paths, namely $x_2y_2x_3$, which is a contradiction again. 

Let the size of $\phi_1$ be $5$ and the face is $x_1y_1y_2y_3x_2$. Again for the reason that $x_2$ is a junction vertex, $y_3$ is different from $x_3$. If both $y_1$ and $y_2$ are different from $x_3$, then the path $x_1y_1y_2y_3x_2$ is a shorter $(x_1,x_2)$-path and hence a contradiction. So we assume when $y_1$ and $x_3$ are the same vertices or $y_2$ and $x_3$ are the same vertices. In the former case,  we get a shorter $(x_2,x_3)$-path, namely $x_3y_2y_3x_2$, which is a contradiction. In the later case, we get a shorter $(x_1,x_3)$-path, namely $x_1y_1x_3$, and this results a contradiction. 

Now we show why the size of $\phi_3$ is at least 6.
Let the size of $\phi_3$ be 4 and is $x_1y_1x_3x_4$. It can be checked that $y_1$ is different from $x_2$. Thus we have got a shorter $(x_1,x_3)$-path, namely $x_1y_1x_3$, which is a contradiction. Similarly, if we assume that the size of $\phi_3$ is 5 and the boundary cycle is $x_1y_1y_2x_3x_4$, then it can be shown that the path $x_1y_1y_2x_3$ is a shorter $(x_1,x_3)$-path

\item \textbf{$B$ has four junction vertices}

Since $B$ has four junction vertices, $\hat{v}(B)\leq 1+1/2+1/2+1/2+1/2.$  Let the exterior faces of the exterior edges $x_1x_2, x_2x_3, x_3x_4$ and $x_4x_1$ be $\phi_1, \phi_2, \phi_3$ and $\phi_4$ respectively. We claim that each face has size at least $6$. This results the refinement of each face has size at least $6$ and $\hat{f}(B)\leq 4+1/6+1/6+1/6+1/6$. Hence using $e(B)=8$, we get $g(B)\leq -6.$

Next we give our proof why each exterior face of the triangular-block has size at least $6$. We show for the case of $\phi_1$, and similar argument can be given for the rest. 

Suppose $\phi_1$ be a $3$-face and let its boundary be $x_1y_1x_2$. Since $x_3$ and $x_4$ are junction vertices, $y_1$ can not $x_3$ or $x_4$ vertex. On the other hand, if $y_1$ is different from $x_3$ and $x_4$, then we have a shorter $(x_1,x_2)$-path, which is a contradiction. 

Let $\phi_1$ be a $4$-face and its boundary be $x_1y_1y_2x_2$. Since $x_2$ is a junction vertex, $y_2$ can not be $x_3$ and since $x_1$ is a junction vertex, $y_1$ can not be $x_4$. If both $y_1$ and $y_2$ are different from $x_3$ and $x_4$ respectively, then we have a shorter $(x_1,x_2)$-path, which is a contradiction. Thus we may assume that $y_1$ and $x_3$ are identical vertices. However in this case we get a shorter $(x_2,x_3)$-path, namely $x_2y_2x_3$, which is a contradiction. Similarly, if $y_2$ and $x_4$ are identical vertices we also have a shorter $(x_1,x_4)$-path, namely $x_1y_1x_4$, which is again a contradiction. 

Lastly, suppose $\phi_1$ be a $5$-face and its boundary be $x_1y_1y_2y_3x_2$. Since $x_1$ is a junction vertex, $y_1$ and $x_4$ can not be identical. In the same way $x_3$ and $y_3$ can not be identical vertices. Hence we way assume the case that $y_2$ is identical with $x_4$ or $y_3$ is identical with $x_4$. In the former case we get a shorter $(x_1,x_4)$-path namely $x_1y_1x_4$, which is a contradiction. On the other hand in the later case, we get a shorter $(x_1,x_4)$-path namely $x_1y_1y_2x_4$, which is again a contradiction. \qedhere 
\end{enumerate}
\end{proof}
\begin{claim}\label{gm}
Let $B$ be a $B_{5,c}$ triangular-block in $G$. Then $g(B)\leq 0$.
\end{claim}
\begin{proof}
Let the exterior vertices of the triangular-block be $x_1,x_2,x_3$ and $x_4$ as shown in Figure \ref{dd}.  
\begin{figure}[ht]
\centering
\begin{tikzpicture}[scale=0.3]
\draw[fill=black](2,0)circle(9pt);
\draw[fill=black](0,-4)circle(9pt);
\draw[fill=black](0,4)circle(9pt);
\draw[fill=black](4,0)circle(9pt);
\draw[fill=black](-4,0)circle(9pt);
\draw[ultra thick](-4,0)--(0,4)--(4,0)--(0,-4)--(-4,0);
\draw[ultra thick](2,0)--(0,4)(0,-4)--(0,4)(2,0)--(0,-4)(2,0)--(4,0);
\node at (-5,0){$x_1$};
\node at (5,0){$x_3$};
\node at (0,5){$x_2$};
\node at (0,-5){$x_4$};
\node at (0.8,0){$x_5$};
\end{tikzpicture}
\caption{$B_{5,c}$ triangular-block}
\label{dd}
\end{figure}
Since $\delta(G)\geq 3$, $x_1$ is a junction vertex. However, $x_3$ may or may not be a junction vertex. Next, we distinguish two cases.

\begin{enumerate}
\item \textbf{$x_3$ is a junction vertex.} 

Observe that the set of all possible exterior paths of $B$ is $\{x_1x_4, x_1x_2, x_2x_3, x_3x_4, \\ x_1x_2x_3, x_1x_4x_3\}.$ It can be shown that each possible exterior face of an exterior path has size at least 6; later we show the case when $x_1x_4$ is an exterior path of $B$, and similar argument can be given for the other possibilities.  Thus, $\hat{f}(B)\leq 4+4/6$. Since, $\hat{v}(B)\leq 3+1/2+1/2$ and  $e(B)=8$, we have $g(B)\leq 0.$

Let $\phi_1$ be an exterior face of the exterior path $x_1x_4$. Notice that $x_4$ is a junction vertex.  Suppose $\phi_1$ be a $3$-face and the boundary be $x_1y_1x_4.$ As $x_1$ and $x_4$ are junction vertices, $y_1$ can not be identical with $x_2$ and $x_3$. This results a non-induced $6$-cycle, which is a contradiction.

Suppose that $\phi_1$ be a $4$-face and the boundary be $x_1y_1y_2x_4.$ Observe that if $y_1$ and $y_2$ are vertices not in $B$, then $G$ contains a non-induced $6$-cycle, which is a contradiction. So, we may assume that one of the vertices is in $B $. From the fact that $x_1$ is a junction vertex, $y_1$ and $x_2$ are different vertices. Again since $x_4$ is a junction vertex, $y_2$ and $x_3$ are different vertices. If $y_1$ and $x_3$ are identical vertices (similarly if $y_2$ and $x_2$ are identical vertices), clearly the vertex $y_2$ is not different from $x_2$ and we get a shorter $(x_3,x_4)$-path , namely $x_3y_2x_4$, and this clearly results a non-induced $6$-cycle.

Suppose that $\phi_1$ be a $5$-face and the its boundary be $x_1y_1y_2y_3x_4$. Since $G$ is $\Theta_6$-free, the at least one of the vertex in $\{y_1,y_2,y_3\}$ is in $B$.  For the reasons given above $y_1$ is different from $x_2$, and $y_3$ is different from $x_3$. If $y_1$ and $x_3$ (similarly $y_3$ and $x_2$) are identical vertices, then we get a shorter $(x_3,x_4)$-path, namely $x_3y_2y_3x_4$, which is a contradiction. On the other hand if $y_2$ and $x_3$ are identical vertices, again we get a shorter $(x_3,x_4)$-path, namely $x_3y_3x_4$, which is a contradiction. 

\item \textbf{$x_3$ is  not a junction vertex.} 

In this case either both $x_2$ and $x_4$ are junction vertices or only one of them is a junction vertex. Otherwise, the graph contains a cut vertex. So, we have the following two subcases.
\begin{enumerate}  
\item[2.1.] \textbf{Only one of the two vertices, $x_2$ or $x_4$, is a junction vertex} 

Without loss of generality, assume $x_4$ is a junction vertex. Obviously, the exterior face of the exterior edge $x_1x_4$ is of size at least $6$. Moreover, the size of the exterior face of the exterior path $x_1x_2x_3x_4$ is at least $8$. Thus, $\hat{f}(B)\leq 4+1/6+3/8$. Assuming that the junction vertices are shared by two triangular-blocks we have, $\hat{v}(B)\leq 3+1/2+1/2.$ Therefore, we get $g(B)\leq -3.$
\item[2.2.] \textbf{Both $x_2$ and $x_4$ are junction vertices}

In this case, the exterior path $x_2x_3x_4$ has an exterior face of size at least $4$ and since the exterior path $x_2x_3x_4$ is a bad cherry, the refinement has size at least 3. For similar arguments given above, it can be seen that the exterior faces of the exterior edges $x_1x_2$ and $x_1x_4$ have size at least $6$. Thus $\hat{f}(B)\leq 4+2/6+1/3$. Since the junction vertices $x_1,x_2$ and $x_4$ are shared with at least two blocks we get $\hat{v}(B)\leq 2+3/2$. Therefore, $g(B)\leq -3.$  \qedhere
\end{enumerate}
\end{enumerate}
\end{proof}
\begin{claim}
Let $B$ be a $B_{5,d}$ triangular-block in $G$. Then $g(B)\leq 0$.
\end{claim}
\begin{proof}
Let $B$ be with its vertices labeled $x_1, x_2, x_3, x_4$ and $x_5$ as seen in Figure \ref{po}.
\begin{figure}[ht]
\centering
\begin{tikzpicture}[scale=0.1]
\draw[ultra thick](0,10)--(9.5,3.1)--(5.9,-8.1)--(-5.9,-8.1)--(-9.5,3.1)--(0,10)(0,10)--(5.9,-8.1)(0,10)--(-5.9,-8.1);
\draw[fill=black](0,10)circle(25pt);
\draw[fill=black](9.5,3.1)circle(25pt);
\draw[fill=black](-9.5,3.1)circle(25pt);
\draw[fill=black](5.9,-8.1)circle(25pt);
\draw[fill=black](-5.9,-8.1)circle(25pt);
\node at (0,13){$x_3$};
\node at (13,3.1){$x_4$};
\node at (-13,3.1){$x_2$};
\node at (10,-8.1){$x_5$};
\node at (-10,-8.1){$x_1$};
\end{tikzpicture}
\caption{$B_{5,d}$ triangular-block }
\label{po}
\end{figure}
Notice that $x_2$ and $x_4$ are junction vertices. It can be checked that each of the exterior edges of the triangular-block has an exterior face whose refinement is of size at least $6$. Thus, $\hat{f}(B)\leq 3+5/6$. We estimate that $\hat{v}(B)\leq 3+1/2+1/2$. Therefore using $e(B)=7$, we get $g(B)\leq -3.$
\end{proof}
\begin{claim}
Let $B$ be a $B_{4,a}$ triangular-block in $G$. Then $g(B)\leq 0$.
\end{claim}
\begin{proof}
Consider $B$ with the exterior vertices labeled $x_1,x_2$ and $x_3$ as shown in Figure \ref{fg}(left).
Since the graph does not contain a cut vertex, the block contains at least two junction vertices. So we distinguish two cases. 
\begin{figure}[ht]
\centering
\begin{tikzpicture}[scale=0.21]
\draw[fill=black](0,0)circle(12pt);
\draw[fill=black](0,5)circle(12pt);
\draw[fill=black](-5,-4)circle(12pt);
\draw[fill=black](5,-4)circle(12pt);
\draw[ultra thick](0,0)--(0,5)(5,-4)--(-5,-4)(-5,-4)--(0,0)--(5,-4)--(0,5)--(-5,-4);
\node at (0,6.5){$x_3$};
\node at (-6.5,-4.5){$x_1$};
\node at (6.5,-4.5){$x_2$};
\node at (0,-2){$x_4$};
\end{tikzpicture}\qquad\qquad
\begin{tikzpicture}[scale=0.21]
\draw[thick,red](-5,-4)--(0,10)--(5,-4);
\draw[fill=black](0,0)circle(12pt);
\draw[fill=black](0,5)circle(12pt);
\draw[fill=black](-5,-4)circle(12pt);
\draw[fill=black](5,-4)circle(12pt);
\draw[fill=black](0,10)circle(12pt);
\draw[ultra thick](0,0)--(0,5)(5,-4)--(-5,-4)(-5,-4)--(0,0)--(5,-4)--(0,5)--(-5,-4);
\node at (0,6.5){$x_3$};
\node at (-6.5,-4.5){$x_1$};
\node at (6.5,-4.5){$x_2$};
\node at (0,-2){$x_4$};
\end{tikzpicture}
\caption{$B_{4,a}$ triangular-block}
\label{fg}
\end{figure}

\begin{enumerate}
\item \textbf{All the three vertices are junction vertices} 

In this case each of the three exterior edges has an exterior face whose refinement is of size at least $6$. Thus, $\hat{f}(B)\leq 3+3/6$. Moreover, $\hat{v}(B)\leq 1+3/2$ and $e(B)=6$. Therefore, $g(B)\leq -3.$

\item \textbf{Only two of the vertices are junction vertices.}

Without loss of generality, assume that the junction vertices are $x_1$ and $x_2$. In this case, the exterior face of the exterior edge $x_1x_2$ is of size at least $6$. However the exterior path $x_1x_3x_2$ has size at least $4$ (see Figure~\ref{fg}(right)), and notice that $x_1x_3x_2$ is a bad cherry. So, $\hat{f}(B)\leq 3+1/6+1/3$. Since, the block has two junction vertices, then $\hat{v}(B)\leq 2+1/2+1/2$. Using $e(B)=6$, we get $g(B)\leq 0$. \qedhere
\end{enumerate}
\end{proof}

\begin{claim}\label{hm}
Let $B$ be a $B_{4,b}$ triangular-block in $G$. Then $g(B)\leq 3$.
\end{claim}
\begin{proof}
Consider the triangular-block $B$ with the labeled vertices $x_1, x_2, x_3$ and $x_4$ as seen in Figure \ref{nm}(left). Clearly $x_2$ and $x_4$ are junction vertices. We distinguish two cases.
\begin{figure}[ht]
\centering
\begin{tikzpicture}[scale=0.28]
\draw[fill=black](-3,0)circle(8pt);
\draw[fill=black](0,3)circle(8pt);
\draw[fill=black](3,0)circle(8pt);
\draw[fill=black](0,-3)circle(8pt);
\draw[ultra thick](-3,0)--(0,3)--(3,0)--(0,-3)--(-3,0)(-3,0)--(3,0);
\node at (-4.2,0){$x_1$};
\node at (0,4){$x_2$};
\node at (4.2,0){$x_3$};
\node at (0,-4){$x_4$};
\end{tikzpicture}\qquad\qquad
\begin{tikzpicture}[scale=0.28]
\draw[thick, red](0,3)--(6,0)--(0,-3)--(-6,0)--(0,3);
\draw[fill=black](-3,0)circle(8pt);
\draw[fill=black](0,3)circle(8pt);
\draw[fill=black](3,0)circle(8pt);
\draw[fill=black](0,-3)circle(8pt);
\draw[fill=black](6,0)circle(8pt);
\draw[fill=black](-6,0)circle(8pt);
\draw[ultra thick](-3,0)--(0,3)--(3,0)--(0,-3)--(-3,0)(-3,0)--(3,0);
\node at (-4.2,0){$x_1$};
\node at (0,4){$x_2$};
\node at (4.2,0){$x_3$};
\node at (0,-4){$x_4$};
\node at (-7.5,0){$x_5$};
\node at (7.5,0){$x_6$};
\end{tikzpicture}
\caption{$B_{4,b}$ triangular-block}
\label{nm}
\end{figure}
\begin{enumerate}
\item \textbf{At least one vertex in $\{x_1,x_3\}$ is a junction vertex} 

Without loss of generality, assume $x_1$ is a junction vertex. In this case the sizes of the exterior faces of the exterior edges $x_1x_2$ and $x_1x_4$ are at least $6$. To get maximum upper bound of $\hat{v}(B)$ and $\hat{f}(B)$ we may assume that $x_3$ is  not a junction vertex and size of the exterior face of the exterior path $x_2x_3x_4$ is $4$. Hence, $\hat{f}(B)\leq 2+2/6+2/4$ and $\hat{v}(B)\leq 1+3/2$. Therefore using $e(B)=5$ we get $g(B)\leq -2.$
\item \label{m1}\textbf{Both $x_1$ and $x_3$ are not junction vertices}

In this case the exterior paths $x_2x_3x_4$ and $x_2x_1x_4$ have an exterior face of sizes either $4$ or at least $6$. So, we have the following subcases.
\begin{enumerate}
\item[2.1.] \textbf{Both are with size at least $6$}

Here we estimate, $\hat{f}(B)\leq 2+4/6$, $\hat{v}(B)\leq 2+1/2+1/2$. Hence, $g(B)\leq -3.$

\item[2.2.] \textbf{Only one of the exterior paths has an exterior face of size $4$}

Without loss of generality assume $x_2x_1x_4$ has an exterior face of size $4$. Let the exterior face be with boundary $x_2x_1x_4x_5x_2$.  Notice that $x_2x_5$ and $x_4x_5$ are trivial triangular-blocks. Indeed if an edge $x_2x_5$ is not a trivial triangular-block, then the edge is incident to a $3$-face with boundary $x_2y_1x_5$, where $y_1$ is a vertex not in $B$. But this results a non-induced $6$-cycle $x_1x_4x_5y_1x_2x_3x_1$, which is a contradiction as $G$ is a $\Theta_6$-free graph. Hence, considering that there is no degree 2 vertex in $G$, either $x_2$ or $x_4$ is shared by at least three triangular-blocks. Thus, $\hat{v}(B)\leq 2+1/2+1/3$. Moreover considering the size of the exterior faces of the exterior paths $x_2x_3x_4$ and $x_2x_1x_4$ we have  $\hat{f}(B)\leq 2+2/4+2/6$. Therefore, in this case we get $g(B)\leq 0$.

\item[2.3.] \label{m222} \textbf{Both exterior paths have exterior faces of size $4$}

In this case, $\hat{f}(B)\leq 2+2/4+2/4$. Let the exterior faces be with boundary $x_2x_1x_4x_5x_2$ and $x_2x_3x_4x_6x_2$ as shown in the Figure \ref{nm}(right). The vertices $x_5$ and $x_6$ can not be identical, otherwise from the assumption that $G$ contains at least $6$ vertices, being that $x_5$ and $x_6$ are identical results a multiple edge, which is a contradiction as $G$ is a simple graph. For the reason given above in Case~(2.2 ) the edges $x_2x_5, x_4x_6, x_2x_6$ and $x_4x_6$ are trivial triangular-blocks. Thus, $x_2$ and $x_4$ are junction vertices which are shared with at least $3$ triangular-blocks. That means, $\hat{v}(B)\leq 2+2/3$. Therefore we get $g(B)\leq 3.$   \qedhere
\end{enumerate}
\end{enumerate}
\end{proof}
\begin{claim}
Let $B$ be a $B_{3}$ triangular-block in $G$. Then $g(B)\leq 0$.
\end{claim}
\begin{proof}
Let the triangular-block $B$ be with vertices $x_1, x_2$ and $x_3$ as shown in Figure \ref{fhg}(left).
\begin{figure}[ht]
\centering
\begin{tikzpicture}[scale=0.18]
\draw[fill=black](0,5)circle(13pt);
\draw[fill=black](-6,-4)circle(13pt);
\draw[fill=black](6,-4)circle(13pt);
\draw[ultra thick](6,-4)--(-6,-4)(-6,-4)(6,-4)--(0,5)--(-6,-4);
\node at (0,7){$x_2$};
\node at (-8,-4.5){$x_1$};
\node at (8,-4.5){$x_3$};
\end{tikzpicture}
\caption{$B_3$ triangular-block}
\label{fhg}
\end{figure}
Due to the degree condition of $G$, each of the three vertices is a junction vertex, and hence $\hat{v}(B)\leq 1/2+1/2+1/2.$ Moreover, each exterior edges in $\{x_1x_2, x_2x_3, x_3x_1\}$ has an exterior face of size at least $4$ resulting $\hat{f}(B)\leq 1+1/4+1/4+1/4$. Therefore, using $e(B)=3$, we get $g(B)\leq 0.$





\end{proof}
\begin{claim}\label{bd}
Let $B$ be a $B_{2}$ triangular-block in $G$. Then $g(B)\leq 0$.
\end{claim}
\begin{proof}
Let the end vertices of the triangular-block be $x_1$ and $x_2$ as shown in Figure \ref{tr}(left). The two exterior faces of the triangular-block have size at least $4$. We consider two cases.
\begin{figure}[ht]
\centering
\begin{tikzpicture}[scale=0.18]
\draw[fill=black](-6,-4)circle(13pt);
\draw[fill=black](6,-4)circle(13pt);
\draw[ultra thick](6,-4)--(-6,-4);
\node at (-8,-4){$x_1$};
\node at (8,-4){$x_2$};
\end{tikzpicture}\qquad\qquad 
\begin{tikzpicture}[scale=0.18]
\draw[thick, red](-6,-4)--(-6,-8)--(6,-8)--(6,-4);
\draw[fill=black](-6,-4)circle(13pt);
\draw[fill=black](6,-4)circle(13pt);
\draw[fill=black](6,-8)circle(13pt);
\draw[fill=black](-6,-8)circle(13pt);
\draw[ultra thick](6,-4)--(-6,-4);
\node at (-8,-4){$x_1$};
\node at (8,-4){$x_2$};
\node at (-8,-8){$x_4$};
\node at (8,-8){$x_3$};
\end{tikzpicture}
\caption{$B_2$ triangular-block}
\label{tr}
\end{figure}
\begin{enumerate}
\item \textbf{Both faces have size at least $5$}

In this case $\hat{f}(B)\leq 1/5+1/5$, $\hat{v}(B)\leq 1/2+1/2$. Using $e(B)=1$, we get $g(B)\leq -7/5.$
\item \label{m3} \textbf{One of the two exterior faces is of size $4$}

Let the exterior face of size $4$, call it $\phi_1$, be with boundary $4$-cycle $x_1x_2x_3x_4x_1$ as shown in Figure~\ref{tr}(right). Notice that the other exterior face of the trivial triangular-block, call it $\phi_2$, has size at least $5$. Otherwise, it is easy to check that there is a $\Theta_6$-graph in $G$.  It is clear that the $2$-paths $x_1x_4x_3$ or $x_4x_3x_2$ (but not both) can be a bad cherry such that the refinement of $\phi_1$ is 3. We distinguish the following cases.
\begin{enumerate}
\item[2.1] \textbf{The refinement of $\phi_1$ is $4$}

Thus, $\hat{f}(B)\leq 1/4+1/5$. In this case either $x_1x_4$ or $x_2x_3$ is a trivial triangular-block. Otherwise, it is easy to show that $G$ contains $\Theta_6$. Thus either $x_1$ or $x_2$ is shared with at least $3$ triangular-blocks considering that $\delta(G)\geq 3$. Therefore, $\hat{v}(B)\leq 1/2+1/3$, and $g(B)\leq -6/5.$
\item [2.2] \textbf{The refinement of $\phi_1$ is $3$}

Without loss of generality assume that the $2$-path $x_1x_4x_3$ is a bad cherry. That means, the path is in a fixed triangular-block, say $B^*$ such $x_1x_3\in E(B^*)$. Notice that $B^*$ can be a $4$-vertex triangular-block like $B_{4,a}$ or $B_{5,c}$. But $B^*$ can not be $B_{5,a}$. Otherwise, it is easy to show $G$ contains a $\Theta_6$-graph. Moreover observe that the edge $x_2x_3$ is a trivial block and from the minimum degree condition of $G$, the vertex $x_2$ is shared by at least $3$ triangular-blocks.

If $x_1$ is shared by at least $3$ triangular-blocks, then $\hat{v}(B)\leq 1/3+1/3$. Since $\hat{f}(B)\leq 1/3+1/5$ and $e(B)=1$, we get $g(B)\leq -1/2$.

Now assume $x_1$ is shared by only two triangular-blocks, namely $B^*$ and the trivial triangular-block $x_1x_2$. Thus, $\hat{v}(B)\leq 1/2+1/3$. Moreover it is easy to check that the exterior face $\phi_2$ is with size at least $6$. Hence, $\hat{f}(B)\leq 1/3+1/6$. Therefore, in this scenario we get $g(B)\leq 0$.\qedhere  
\end{enumerate}
\end{enumerate}
\end{proof}
We notice that there is only one possible case for which a triangular-block $B$ in $G$ may assume positive $g(B)$. The only possibility is briefly explained in the proof of Claim \ref{hm}~(2.3). The triangular-block and structures of the boundary of its exterior faces are shown in Figure~\ref{nm}(right). 

Observe that the exterior faces of the exterior paths $x_2x_1x_4$ and $x_2x_3x_4$ have size $4$ and the edges $x_4x_6$, $x_4x_5$, $x_2x_6$ and $x_2x_5$ are trivial triangular-blocks. Denote the trivial triangular-blocks respectively as $B^1, B^2, B^3$, and $B^4$. Let us call such trivial triangular-blocks as \say{\textit{good blocks}}.

It is easy to show that if any of these good blocks plays a similar role with a different $B_{4,b}$ triangular-block, the graph contains a $\Theta_6$. Indeed, Suppose $B^1$ plays such a role. Thus, there is a $B_{4,b}$ triangular-block, say $B'$, such that $B^1$ is one of the four good blocks of $B'$ (see the two possible structures in Figure \ref{vb}). But in both scenario it is easy to show $G$ contains $\Theta_6 $, which is a contradiction. 
\begin{figure}[ht]
\centering
\begin{tikzpicture}[scale=0.3]
\draw[ultra thick](-3,0)--(0,3)--(3,0)--(0,-3)--(-3,0)(-3,0)--(3,0);
\draw[thick,red](0,3)--(6,0)(0,-3)--(-6,0)--(0,3);
\draw[ultra thick](-3,-6)--(3,-6)(0,-3)--(-3,-6)--(0,-9)--(3,-6)--(0,-3);
\draw[thick, red](0,-9)..controls (10,-11) and (7,0) .. (6,0);
\draw[thick,blue](0,-3)--(6,0);
\draw[fill=black](-3,0)circle(8pt);
\draw[fill=black](0,3)circle(8pt);
\draw[fill=black](3,0)circle(8pt);
\draw[fill=black](0,-3)circle(8pt);
\draw[fill=black](6,0)circle(8pt);
\draw[fill=black](-6,0)circle(8pt);
\draw[fill=black](-3,-6)circle(8pt);
\draw[fill=black](3,-6)circle(8pt);
\draw[fill=black](0,-9)circle(8pt);
\node at (0,1){$B$};
\node at (0,-5){$B'$};
\node at (-4,0){$x_1$};
\node at (0,4){$x_2$};
\node at (4,0){$x_3$};
\node at (-1.5,-3.5){$x_4$};
\node at (-7.5,0){$x_5$};
\node at (7.5,0){$x_6$};
\end{tikzpicture}
\begin{tikzpicture}[scale=0.3]
\draw[ultra thick](-3,0)--(0,3)--(3,0)--(0,-3)--(-3,0)(-3,0)--(3,0);
\draw[thick,red](0,3)--(6,0)(0,-3)--(-6,0)--(0,3);
\draw[ultra thick](6,0)--(3,-3)--(6,-6)--(9,-3)--(6,0)(3,-3)--(9,-3);
\draw[thick,red](0,-3)--(6,-6);
\draw[thick,blue](0,-3)--(6,0);
\draw[fill=black](-3,0)circle(8pt);
\draw[fill=black](0,3)circle(8pt);
\draw[fill=black](3,0)circle(8pt);
\draw[fill=black](0,-3)circle(8pt);
\draw[fill=black](6,0)circle(8pt);
\draw[fill=black](-6,0)circle(8pt);
\draw[fill=black](3,-3)circle(8pt);
\draw[fill=black](9,-3)circle(8pt);
\draw[fill=black](6,-6)circle(8pt);
\node at (0,1){$B$};
\node at (6,-4){$B'$};
\node at (-4,0){$x_1$};
\node at (0,4){$x_2$};
\node at (4,0){$x_3$};
\node at (0,-4){$x_4$};
\node at (-7.5,0){$x_5$};
\node at (7.5,0){$x_6$};
\end{tikzpicture}
\caption{Two possible structures showing two different $B_{4,b}$ triangular-blocks sharing a good block.}
\label{vb}
\end{figure}

This implies that for each $B_{4,b}$ triangular-block meeting the conditions in Claim~\ref{hm}~(2.3), correspondingly we have unique four good blocks on the boundaries of the exterior faces of the triangular-block. In particular for $B$, the corresponding four good blocks are $B^1,B^2,B^3$ and $B^4$.

Observe that each good block is with exterior face whose refinements are 4 and at least 5. Moreover, the at least one end vertex of a good block is shared by at least 3 triangular-blocks. Thus for $i\in\{1,2,3,4\}$, we have $\hat{f}(B^i)\leq 1/4+1/5$ and $\hat{v}(B^i)\leq 1/2+1/3$, which implies $g(B^i)\leq-6/5$.  

Define  $\mathcal{P}=\{B^1,B^2,B^3,B^4,B\}$. From the prove of Claim~\ref{hm}~(2.3), $g(B)\leq 3$.  Clearly,
$$\sum\limits_{B^*\in\mathcal{P}}g(B^*)\leq 3+4\left(-6/5\right)=-9/5.$$

Let $\mathcal{B}$ be the family of all triangular-blocks of $G$ and $\mathcal{B}'=\{B_1,B_2,\dots, B_k\}$ be the set of $B_{4,b}$ triangular-blocks in $G$ meeting the conditions stated in Claim~\ref{hm}~(2.3).

Define $\mathcal{P}_{i}=\{B_i^1,B_i^2,B_i^3,B_i^4,B_i\}$ for all $i\in\{1,2,3,\dots ,k\}$ where $B_i^j$, $j\in\{1,2,3,4\}$ are the corresponding four good blocks of $B_i$. 

Define $$\mathcal{P}_{k+1}=\mathcal{B}\backslash \bigcup\limits_{i=1}^k\mathcal{P}_i.$$ 
Since $g(B^*)\leq 0$ for all $B^*\in\mathcal{P}_{k+1}$, $\sum\limits_{B^*\in\mathcal{P}_{k+1}}g(B^*)\leq 0$. Let $m=k+1$. Thus we have the partition $\mathcal{P}_1,\mathcal{P}_2,\dots,\mathcal{P}_{m}$ of  $\mathcal{B}$ such that $\sum\limits_{B^*\in \mathcal{P}_i}g(B^*)\leq 0$ for all $i\in \{1,2,3,\dots,m\}$.
Therefore, 
$$24f(G)-17e(G)+6v(G)=\sum\limits_{i=1}^{m}\sum\limits_{B^*\in\mathcal{P}_i}g(B^*)\leq 0.$$
This completes the proof of Theorem \ref{vcb}.
\section{Proof of Theorem \ref{bvc}}
We show the proof for the connected graphs only. Indeed, if the graph is not connected, then we can add an edge between components by keeping the graph connected and $\Theta_6$-free. So, if we show that the theorem holds for a connected graph, then it holds for disconnected too. 

To finish the proof of the Theorem \ref{bvc}, we need the following lemma.
\begin{lemma}\label{de}
Let $G$ be an $n$-vertex ($n\geq 2)$ $ \Theta_6$-free plane graph, then $e(G)\leq\frac{18}{7}n-\frac{27}{7}$.
\end{lemma}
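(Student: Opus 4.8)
The plan is to reduce Lemma~\ref{de} to Theorem~\ref{vcb} by a standard minimal-counterexample argument that strips away the obstructions (low-degree vertices and cut vertices) while tracking how the quantity $\frac{18}{7}n - \text{const}$ changes. Suppose $G$ is a counterexample to Lemma~\ref{de} with the fewest vertices; we may assume $G$ is connected by the reduction already mentioned in the text, and we may assume $G$ has at least, say, $3$ vertices since small cases are trivial to check by hand. We will derive a contradiction by showing $G$ must in fact be $2$-connected with $\delta(G) \geq 3$, at which point Theorem~\ref{vcb} gives $e(G) \leq \frac{18}{7}n - \frac{48}{7} < \frac{18}{7}n - \frac{27}{7}$, contradicting that $G$ violates Lemma~\ref{de}.

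First I would handle low-degree vertices. If $G$ has a vertex $v$ of degree $0$ or $1$, delete it: the resulting graph $G'$ on $n-1$ vertices is still $\Theta_6$-free and planar, so by minimality $e(G') \leq \frac{18}{7}(n-1) - \frac{27}{7}$, hence $e(G) \leq e(G') + 1 \leq \frac{18}{7}n - \frac{18}{7} - \frac{27}{7} + 1 < \frac{18}{7}n - \frac{27}{7}$, a contradiction. If $G$ has a vertex $v$ of degree $2$ with neighbors $a,b$, delete $v$; if $ab \notin E(G)$ add the edge $ab$ (this cannot create a $\Theta_6$, since any $\Theta_6$ through $ab$ would lift to one through the path $avb$ in $G$). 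The new graph $G'$ on $n-1$ vertices has $e(G') \geq e(G) - 1$ and is $\Theta_6$-free planar, so minimality gives $e(G) \leq e(G') + 1 \leq \frac{18}{7}(n-1) - \frac{27}{7} + 1 < \frac{18}{7}n - \frac{27}{7}$, again a contradiction. So $\delta(G) \geq 3$.

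Next I would handle connectivity. Since $\delta(G) \geq 3$ forces $n \geq 4$, suppose $G$ has a cut vertex $v$, so $G = G_1 \cup G_2$ with $V(G_1) \cap V(G_2) = \{v\}$, $n_i := v(G_i) \geq 2$, and $n_1 + n_2 = n + 1$. Each $G_i$ is $\Theta_6$-free planar; by minimality (each $n_i < n$) we get $e(G_i) \leq \frac{18}{7}n_i - \frac{27}{7}$, so
\[
e(G) = e(G_1) + e(G_2) \leq \frac{18}{7}(n_1 + n_2) - \frac{54}{7} = \frac{18}{7}(n+1) - \frac{54}{7} = \frac{18}{7}n - \frac{36}{7} < \frac{18}{7}n - \frac{27}{7},
\]
a contradiction. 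Hence $G$ is $2$-connected. Now Theorem~\ref{vcb} applies (note $n \geq 6$ can be arranged, or the few remaining small cases $4 \leq n \leq 5$ checked directly against the bound $\frac{18}{7}n - \frac{27}{7}$), giving $e(G) \leq \frac{18}{7}n - \frac{48}{7} < \frac{18}{7}n - \frac{27}{7}$, the final contradiction. Therefore no counterexample exists.

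The main obstacle I anticipate is not any single deduction but rather pinning down the base cases cleanly: one must verify that for the small values of $n$ (below the threshold where Theorem~\ref{vcb} is stated, i.e.\ $n < 6$, and also $n=2,3$) the bound $e(G) \leq \frac{18}{7}n - \frac{27}{7}$ holds outright — for $n=2$ this reads $e(G) \leq \frac{9}{7}$ so $e(G) \leq 1$, which is automatic, and for $n=3,4,5$ one compares against $3n - 6$ and checks the inequality numerically, noting $\frac{18}{7}n - \frac{27}{7} \geq 3n-6$ fails only for small $n$ so a direct argument (or the observation that such graphs have few edges) is needed. A secondary point to be careful about is the degree-$2$ reduction: one must confirm that contracting through a degree-$2$ vertex never introduces a $\Theta_6$, which holds because the added edge $ab$ together with any cycle/chord structure in $G'$ pulls back to the same structure in $G$ with $ab$ replaced by the path $a v b$, and a $\Theta_6$-graph remains a $\Theta_6$-graph (in the family sense, possibly a different member) under this subdivision only if lengths work out — here it is safest to phrase it as: $G'$ contains $\Theta_6 \Rightarrow G$ contains a $\Theta_k$ with $k \geq 6$ through $a v b$, and one checks no such subgraph of $G$ on the relevant vertices can exist, i.e.\ the cleanest route is to only add $ab$ when it does not already appear and argue directly that $G$ is then $\Theta_6$-free $\Rightarrow$ $G'$ is, contrapositively.
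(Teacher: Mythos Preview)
Your overall strategy matches the paper's: reduce to the $2$-connected, $\delta\geq 3$ case and invoke Theorem~\ref{vcb}. The paper does this by block decomposition plus induction on deleting a degree-$2$ vertex; your minimal-counterexample phrasing is equivalent.

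However, your degree-$2$ step contains a genuine error. Adding the edge $ab$ after deleting $v$ can create a $\Theta_6$ even when $G$ was $\Theta_6$-free. Concretely: take the path $a,x_1,x_2,x_3,x_4,b$, add the chord $ax_3$, and attach $v$ adjacent to $a$ and $b$. This $G$ has no $6$-cycle at all (its cycles have lengths $4$, $5$, $7$), hence is $\Theta_6$-free; but $G' = G - v + ab$ contains the $6$-cycle $a\,x_1\,x_2\,x_3\,x_4\,b\,a$ with chord $ax_3$, a $\Theta_6$. Your attempted justification (``any $\Theta_6$ through $ab$ lifts to one through $avb$'') fails precisely because subdividing an edge of a $\Theta_6$ yields a $\Theta_7$, not a $\Theta_6$, and the later hedge about ``$G$ contains a $\Theta_k$ with $k\geq 6$'' is irrelevant since being $\Theta_6$-free says nothing about $\Theta_7$.

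The fix is simply not to add the edge. Deleting $v$ loses exactly two edges, and $2 < \tfrac{18}{7}$ is all you need:
\[
e(G) = e(G-v) + 2 \;\leq\; \tfrac{18}{7}(n-1) - \tfrac{27}{7} + 2 \;=\; \tfrac{18}{7}n - \tfrac{31}{7} \;<\; \tfrac{18}{7}n - \tfrac{27}{7}.
\]
This is exactly what the paper does. With this correction the rest of your argument (cut-vertex split, base cases $n\leq 5$, then Theorem~\ref{vcb}) goes through and is essentially the paper's proof.
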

\begin{proof}
First, we prove the statement for a connected graph, and then it is easy to finish the prove for the disconnected case. That is for each component $G_i$, let $v(G_i)=n_i$. If $e(G_i)\leq\frac{18}{7}n_i-\frac{27}{7}$ holds, clearly we have $e(G)=\sum\limits_{i}e(G_i)\leq\frac{18}{7}n-\frac{27}{7}$.

Let the number of blocks, maximal subgraphs of $G$ containing no cut vertex, be $b$. Let the blocks are $B_1', B_2', B_3',\dots , B_b'$ and with number of vertices (including the cut vertices) $n_1,n_2,\dots, n_b$ respectively. 

It is easy to check that, if $B_i'$ is a block with $2\leq n_i\leq 5$, then $e(B_i')\leq \frac{18}{7}n_i-\frac{27}{7}$. Suppose that $n_i\geq 6$. If there is a vertex of degree 2 in $B_i'$, say $v$, then by induction, $$e(B_i')= e(B'_i-v)+2\leq \frac{18}{7}(n_i-1)-\frac{27}{7}+2=\frac{18}{7}n_i-\frac{31}{7}\leq \frac{18}{7}n_i-\frac{27}{7}.$$
So suppose that $d_{B_i'}(u)\geq 3$ for all $u\in V(B_i')$. By Theorem \ref{vcb}, $$e(B_i')\leq \frac{18}{7}n_i-\frac{48}{7}\leq \frac{18}{7}n_i-\frac{27}{7}.$$
Hence, for each block $B_i'$, $i\in\{1,2,\dots,b\}$, $e(B_i')\leq \dfrac{18}{7}n_i-\dfrac{27}{7}.$ Therefore,
\begin{align*}
e(G)\leq \sum\limits_{i=1}^{b}\bigg(\dfrac{18}{7}n_i-\dfrac{27}{7}\bigg)=\dfrac{18}{7}\sum\limits_{i=1}^{b}n_i-\dfrac{27}{7}b=\dfrac{18}{7}(n+b-1)-\dfrac{27}{7}b=\dfrac{18}{7}n-\frac{9}{7}b-\dfrac{18}{7}
\end{align*}
where we get, $\dfrac{18}{7}n-\dfrac{9}{7}b-\dfrac{18}{7}\leq \dfrac{18}{7}n-\dfrac{27}{7}$, for $b\geq 1$.
\end{proof}


 From the end of the proof of Lemma $3$, we can directly get the following Lemma.

\begin{lemma}\label{blocks} Let $G$ be an $n$-vertex and $\Theta_6$-free connected plane graph with $b$ blocks. Then $e(G) \leq \frac{18}{7}n -\frac{9b +18}{7}$.
\end{lemma}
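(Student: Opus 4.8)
The plan is to follow exactly the structure of the proof of Lemma \ref{de}, but keep track of the better additive constant coming from Theorem \ref{vcb} in a more careful way, rather than immediately relaxing $-\frac{48}{7}$ to $-\frac{27}{7}$. Let $G$ be a connected $\Theta_6$-free graph on $n$ vertices with blocks (maximal $2$-connected subgraphs, or bridges) $B_1',\dots,B_b'$ of orders $n_1,\dots,n_b$, so that $e(G)=\sum_{i=1}^b e(B_i')$ and $\sum_{i=1}^b n_i = n+b-1$ (the standard block-tree identity). The claim will follow once we show that each block satisfies $e(B_i')\le \frac{18}{7}n_i-\frac{27}{7}$, since then
\begin{align*}
e(G)\le \sum_{i=1}^b\Bigl(\frac{18}{7}n_i-\frac{27}{7}\Bigr)=\frac{18}{7}(n+b-1)-\frac{27}{7}b=\frac{18}{7}n-\frac{9b+18}{7}.
\end{align*}
So the real content is the per-block bound $e(B_i')\le\frac{18}{7}n_i-\frac{27}{7}$, which is precisely Lemma \ref{de} applied to the block $B_i'$ (a block is in particular a connected $\Theta_6$-free plane graph), so in fact this lemma is an immediate corollary of Lemma \ref{de} together with the block-tree counting identity.

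First I would recall or reprove the block-tree identity $\sum_i n_i = n+b-1$: contracting each block to the bipartite block-cut tree, each cut vertex of multiplicity $t$ is counted $t$ times among the $n_i$, contributing an excess of $t-1$, and the total excess $\sum_{c}(\mathrm{mult}(c)-1)$ equals the number of edges of the block-cut tree minus something — cleaner is to induct on $b$: adding a new leaf block sharing one cut vertex with the rest increases $\sum n_i$ by $n_{\text{new}}$ and increases $n$ by $n_{\text{new}}-1$, so $\sum n_i - n$ increases by $1$, matching the increase of $b$ by $1$; the base case $b=1$ gives $\sum n_i = n_1 = n = n + 1 - 1$. Then I would invoke Lemma \ref{de} on each $B_i'$ to get $e(B_i')\le\frac{18}{7}n_i-\frac{27}{7}$ and assemble the displayed computation above. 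Finally I would note the disconnected case is handled exactly as in the proof of Theorem \ref{bvc}: adding edges between components preserves planarity and $\Theta_6$-freeness and only increases $e(G)$, so the connected bound suffices — though since this lemma is stated for connected $G$, that remark is optional.

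The only place requiring any thought is making sure the per-block inequality $e(B_i')\le\frac{18}{7}n_i-\frac{27}{7}$ really does hold for blocks of all sizes, including the small cases $n_i=2,3,4,5$ and the $2$-connected case with minimum degree possibly $2$; but all of this is already established inside the proof of Lemma \ref{de} (small blocks by direct check, a degree-$2$ vertex by the stated induction, and $\delta\ge 3$ blocks by Theorem \ref{vcb}, which even gives the stronger $-\frac{48}{7}$). So I expect no genuine obstacle here — the lemma is essentially a bookkeeping restatement of Lemma \ref{de} that isolates the dependence of the additive constant on the number of blocks $b$, which is the form needed to drive the proof of Theorem \ref{bvc}.
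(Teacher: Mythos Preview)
Your proposal is correct and follows essentially the same approach as the paper: apply Lemma~\ref{de} to each block to obtain $e(B_i')\le \frac{18}{7}n_i-\frac{27}{7}$, sum over all blocks, and use the block-tree identity $\sum_i n_i = n+b-1$ to arrive at $\frac{18}{7}n-\frac{9b+18}{7}$. The paper's proof is the one-line version of exactly this computation.
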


Note that Lemma~\ref{blocks} implies that if $G$ is an $n$-vertex $\Theta_6$-free planar graph with $b\geq 4$ blocks, then $e(G) \leq \frac{18n}{7} - \frac{54}{7} < \frac{18n}{7} - \frac{48}{7}.$

We need the following claim to prove the lemma which follows.
\begin{claim}\label{3blocks} If $G$ is an $n$-vertex $\Theta_6$-free plane graph containing a vertex of degree 2 such that,  $G-v$ has at least 3 blocks, then $e(G) < \frac{18}{7}n - \frac{48}{7}.$
\end{claim}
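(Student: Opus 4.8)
The plan is to delete the degree-2 vertex $v$, apply the block bound of Lemma~\ref{blocks} to what remains, and use $e(G)=e(G-v)+2$. Write $H=G-v$; it is an $(n-1)$-vertex $\Theta_6$-free (planar) graph which by hypothesis has at least $3$ blocks. The first step I would record is a structural one: since $G$ is connected and $d_G(v)=2$, the graph $H$ has at most two connected components. Indeed, any $w\in V(H)$ lies on a path in $G$ to $v$, and that path enters $v$ through one of its two neighbours, so inside $H$ the vertex $w$ is connected to one of those two neighbours; hence every component of $H$ contains one of them. So either $H$ is connected, or it has exactly two components $C_1$ and $C_2$.

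If $H$ is connected, Lemma~\ref{blocks} with $b\geq 3$ gives $e(H)\leq \frac{18}{7}(n-1)-\frac{9\cdot 3+18}{7}=\frac{18n-63}{7}$, whence $e(G)\leq \frac{18n-63}{7}+2=\frac{18n-49}{7}<\frac{18}{7}n-\frac{48}{7}$.

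If $H$ has components $C_1,C_2$ with $|V(C_i)|=n_i$ and $b_i$ blocks, then $n_1+n_2=n-1$ and $b_1+b_2\geq 3$. When $n_1,n_2\geq 2$, adding the two instances of Lemma~\ref{blocks} gives $e(G)=e(C_1)+e(C_2)+2\leq \frac{18(n-1)}{7}-\frac{9(b_1+b_2)+36}{7}+2\leq \frac{18n-67}{7}$. When one of the components, say $C_2$, is a single vertex, then $b_2=1$, so $b_1\geq 2$, $e(C_2)=0$, $n_1=n-2$, and Lemma~\ref{blocks} for $C_1$ gives $e(G)\leq \frac{18(n-2)}{7}-\frac{9\cdot 2+18}{7}+2=\frac{18n-58}{7}$. (Both components cannot be singletons, since then $H$ would have only two blocks.) In every case $e(G)<\frac{18}{7}n-\frac{48}{7}$.

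None of these calculations is delicate; the two things that need attention are the structural observation bounding the number of components of $H$ by two, and the fact that in the connected case the margin is only $\frac{1}{7}$, so one genuinely needs the hypothesis $b\geq 3$ — already $b=2$ yields merely $e(G)\leq \frac{18n-40}{7}$, which exceeds the target. The singleton-component subcase is the one place where Lemma~\ref{blocks} cannot be invoked verbatim (it is false for $K_1$), so there one uses $e(K_1)=0$ directly. I expect this modest bookkeeping, rather than any real difficulty, to be the main obstacle.
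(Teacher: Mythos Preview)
Your argument is correct and follows the same route as the paper: delete $v$, apply Lemma~\ref{blocks} to $G-v$ with $b\ge 3$, and add back $2$. The paper's proof is a single line because, in its context, $G$ is $2$-connected (this claim sits inside the proof of Lemma~\ref{2bound}), so $G-v$ is automatically connected and Lemma~\ref{blocks} applies directly; your additional case analysis for disconnected $H$ is unnecessary there but harmless, and your handling of the singleton component is the right bookkeeping if one wants the claim to stand on its own.
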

\begin{proof}

By Lemma~\ref{blocks} we have that $e(G-v) \leq  \frac{18(n-1)}{7} - \frac{45}{7}$, hence $$e(G) \leq \frac{18n - 18 - 45}{7} + 2 = \frac{18n}{7} - \frac{49}{7} < \frac{18n}{7} - \frac{48}{7}.$$
\end{proof}

\begin{lemma}\label{2bound} Let $G$ be an $n$-vertex $\Theta_6$-free 2-connected plane graph with $\delta(G)=2$, then $$e(G) \leq \frac{18n}{7} - \begin{cases} \frac{38}{7} & \text{ for } n=6, \\ \frac{42}{7}& \text{ for } n=7, \\ \frac{46}{7}& \text{ for } n=8, \\ \frac{48}{7}& \text{ for } n\geq 9. \end{cases}$$
\end{lemma}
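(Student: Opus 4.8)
\textbf{Proof strategy for Lemma~\ref{2bound}.}
The plan is to prove the bound $e(G) \leq \frac{18n}{7} - \frac{48}{7}$ for $n \geq 9$ essentially as a consequence of Theorem~\ref{vcb}, while handling the small cases $n \in \{6,7,8\}$ by a finer analysis. For $n \geq 9$: if $\delta(G) \geq 3$, then Theorem~\ref{vcb} gives $e(G) \leq \frac{18}{7}n - \frac{48}{7}$ directly, so I may assume $G$ has a vertex $v$ of degree $2$ (it has no vertex of degree $\leq 1$ since $G$ is $2$-connected and $n \geq 3$). Then I would pass to $G - v$, which is connected on $n-1$ vertices. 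If $G-v$ has at least $3$ blocks, Claim~\ref{3blocks} finishes it. So the crux is the case where $G-v$ has one or two blocks. If $G - v$ is $2$-connected (one block), apply induction to $G-v$ on $n-1 \geq 8$ vertices; if $G-v$ has exactly two blocks, apply Lemma~\ref{blocks} with $b=2$, giving $e(G-v) \leq \frac{18(n-1)}{7} - \frac{36}{7}$, hence $e(G) \leq \frac{18n}{7} - \frac{40}{7}$, which is not good enough — so this sub-case needs extra care and is the main obstacle (see below).

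For the induction step when $G-v$ is $2$-connected on $m = n-1$ vertices: if $m \geq 9$ the inductive hypothesis gives $e(G-v) \leq \frac{18m}{7} - \frac{48}{7}$, so $e(G) \leq \frac{18(n-1)}{7} - \frac{48}{7} + 2 = \frac{18n}{7} - \frac{44}{7}$, again short of $-\frac{48}{7}$. This shows that the naive "delete a degree-$2$ vertex and recurse" does not close the gap, and that one must instead use structural information about $G$ near $v$. The right move is: when $G$ has a vertex $v$ of degree $2$ with neighbors $a,b$, consider whether $ab \in E(G)$. If $ab \notin E(G)$, then $G' = (G - v) + ab$ is still planar, $2$-connected, $\Theta_6$-free (a $\Theta_6$ through $ab$ would lift to one through the path $avb$), and $e(G') = e(G) - 1$; recursing on $G'$ loses nothing in the constant. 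If $ab \in E(G)$, then $v,a,b$ form a triangular block of type $B_3$-incident structure, and one can argue directly that such pendant triangles are limited; the computation of $g$ over triangular blocks already done in the proof of Theorem~\ref{vcb} can be adapted to accommodate degree-$2$ vertices. So the cleanest plan is: reduce to $\delta(G) \geq 3$ by repeatedly contracting/suppressing degree-$2$ vertices (adding the missing edge when the two neighbors are non-adjacent, which does not decrease $e - \frac{18}{7}v$ by the right amount), arriving either at a $2$-connected graph with $\delta \geq 3$ on $n' \leq n$ vertices where Theorem~\ref{vcb} applies, or at one of finitely many small exceptional configurations.

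The small cases $n \in \{6,7,8\}$ I would dispatch by the same block/degree analysis specialized to these sizes, or by appealing to the discharging inequality $24f(G) - 17e(G) + 6v(G) \leq 0$ from the proof of Theorem~\ref{vcb} combined with Euler's formula, noting that for small $n$ the $\Theta_6$-freeness forces few triangular blocks of the "bad" type $B_{4,b}$; in fact for $n=6,7,8$ one can enumerate the maximal $\Theta_6$-free $2$-connected planar graphs directly. The required bounds $\frac{38}{7}, \frac{42}{7}, \frac{46}{7}$ correspond to $e(G) \leq 10, 12, 14$ respectively, which are easy to verify by hand or by the triangular-block count.

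\textbf{Main obstacle.} The genuine difficulty is the sub-case where $G$ has a degree-$2$ vertex $v$ whose neighbors $a, b$ are adjacent and $G - v$ has exactly two blocks (so $a$ or $b$ is a cut vertex of $G-v$): here neither Claim~\ref{3blocks} nor a clean edge-addition recursion applies, and one must show that this configuration forces additional structure — for instance that the two blocks of $G-v$ are themselves small or that $G$ contains another deletable vertex — to recover the full $-\frac{48}{7}$. I expect this to be where the bulk of the casework lies, and it is presumably why the statement is phrased as a separate lemma rather than folded into Theorem~\ref{bvc}.
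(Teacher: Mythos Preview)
Your suppression trick is broken: it is \emph{not} true that $(G-v)+ab$ remains $\Theta_6$-free whenever $G$ is $\Theta_6$-free and $v$ has degree~$2$ with non-adjacent neighbours $a,b$. Take any $\Theta_6$ and subdivide one edge of its outer $6$-cycle by a new vertex $v$; the resulting $7$-vertex graph $G$ is $2$-connected, planar, and contains no $6$-cycle at all (the only cycles have lengths $4$, $5$, $7$), yet $(G-v)+ab$ is precisely the original $\Theta_6$. The reason your lifting argument fails is that replacing the edge $ab$ by the $2$-path $avb$ turns a $6$-cycle through $ab$ into a $7$-cycle, so a $\Theta_6$ in $G'$ using $ab$ does not produce a $\Theta_6$ in $G$. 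Since this reduction is invalid, your plan to reach $\delta(G)\ge 3$ by repeated suppression collapses, and with it the whole inductive scheme for $n\ge 9$.

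The paper proceeds quite differently. It proves the bound one value of $n$ at a time for $n\in\{6,7,8,9,10\}$: delete a degree-$2$ vertex $v$ (Theorem~\ref{vcb} already covers $\delta(G)\ge 3$) and analyse the block decomposition of $G-v$. For each $n$ in this range the configurations of $G-v$ with too many edges can be listed explicitly (e.g.\ a $K_4$ glued to a $K_5^-$ at one vertex when $n-1=8$, or two $K_5^-$'s when $n-1=9$), and one checks directly that reattaching $v$ by a $2$-path to any two boundary vertices of such a configuration creates a $\Theta_6$, so these cases are excluded. Only from $n\ge 11$ onward does the argument become uniform: either $\delta(G)\ge 3$, or $G-v$ has a block of size at least $6$ and a short auxiliary claim finishes, or $G-v$ has at least three blocks and Claim~\ref{3blocks} applies. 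So the real content is the ad hoc enumeration for $n\le 10$; the obstacle you singled out (the case $ab\in E(G)$ with two blocks in $G-v$) is not where the difficulty lies.
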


\begin{proof}

Let $n=6$, we are going to show that in fact $e(G)\leq \frac{18}{7}n-\frac{38}{7} = 10$ for any $\Theta_6$-free plane graph. 
We claim that  $G$ does not contain $11$ edges. 
Indeed, if $G$ contains $11$ edges, then an embedding of  $G$ on the plane contains a $4$-face and all the remaining faces are of size $3$. 
Thus, fixing the $4$-face the unbounded face of the plane drawing of $G$, then $G$ is a triangular-block on $6$ vertices. 
Thus, $G$ contains a $\Theta_6$, which is a contradiction. 
Therefore, $e(G)\leq 10$. 

Let $n=7$, $G$ be an $n$-vertex $\Theta_6$-free plane graph which is $2$-connected. 
We want to show that $e(G)\leq \frac{18}{7}n-\frac{42}{7}$.  Let $v$ be a vertex of degree $2$ in $G$. 
Thus $e(G-v)\leq \frac{18}{7}(n-1)-\frac{38}{7}$. 
Hence $e(G)=e(G-v)+2\leq \frac{18}{7}(n-1)-\frac{38}{7}+2\leq \frac{18}{7}n-\frac{42}{7}$.

It can be shown that for $n=7$ and any $\Theta_6$-free plane graph $G$,  $e(G)\leq \frac{18}{7}n-\frac{42}{7}.$ 
Indeed, if $G$ is $2$-connected, we have already proved. 
If $G$ contains at least $3$ blocks, then it holds by Lemma \ref{blocks}. 
Suppose that it contains only $2$ blocks. 
Let the blocks be $B_1'$ and $B_2'$ with number of vertices $n_1$ and $n_2$ respectively. 
If $n_1=2$ and $n_2=6$, then $e(B_1')=1$ and $e(B_2)\leq 10$. Hence $e(G)\leq 11$, which implies $e(G)\leq \frac{18}{7}n-\frac{42}{7}$. 
If $n_1=3$ and $n_2=5$, then $e(B_1')=3$ and $e(B_2')\leq 9$. 
Thus, $e(G)\leq 12$, that means $e(G)\leq \frac{18}{7}n-\frac{42}{7}$. 
If $n_1=n_2=4$, then $e(B_1')\leq 6$ and $e(B_2')\leq 6$. Thus, $e(G)\leq 12$, again $e(G)\leq \frac{18}{7}n-\frac{42}{7}.$ 
Therefore from all the results we have, $e(G)\leq \frac{18}{7}n-\frac{42}{7}.$

Let $n=8$, and $G$ be an $n$-vertex $\Theta_6$-free plane graph which is $2$-connected. We want to show that $e(G)\leq \frac{18}{7}n-\frac{46}{7}$. Let $v$ be a vertex of degree $2$ in $G$. Thus, $G-v$ is a  $7$-vertex $\Theta_6$-free plane graph. Thus, from the previous result $e(G-v)\leq \frac{18}{7}(n-1)-\frac{42}{7}.$ Thus, $e(G)\leq \frac{18}{7}(n-1)-\frac{42}{7}+2=\frac{18}{7}n-\frac{46}{7}.$  

We observe further bounds on $e(G)$ if $G$ is not $2$-connected. If $G$ contains two blocks, we claim that $e(G)\leq \frac{18}{7}n-\frac{39}{7}$. Indeed, let the blocks be $B_1'$ and $B_2'$ with $n_1$ and $n_2$ vertices respectively. If $n_1=2$ and $n_2=7$, then $e(B_1')=1$ and $e(B_2')\leq 12$. Thus, $e(G)\leq 13$, that means $e(G)\leq \frac{18}{7}n-\frac{46}{7}.$  If $n_1=3$ and $n_2=6$, then $e(B_1')=3$ and $e(B_2')\leq 10$. Thus, $e(G)\leq 13$.  If $n_1=4$ and $n_2=5$, then $e(B_1')\leq 6$ and $e(B_2')\leq 9$. Thus, $e(G)\leq 15$. Therefore, $e(G)\leq \frac{18}{7}n-\frac{39}{7}.$ Observe that the only structure such that $e(G)>\frac{18}{7}n-\frac{46}{7}$ is when $B_1'$ is a $K_4$ and $B_2'$ is a $K_5^- $ or vice-versa (see Figure \ref{ffr}).

Let $n=9$, and $G$ be an $n$-vertex $\Theta_6$-free plane graph which is $2$-connected. We want to show that $e(G)\leq \frac{18}{7}n-\frac{50}{7}$ and hence $e(G)\leq\frac{18}{7}n-\frac{48}{7}$. Let $v$ be a vertex of degree $2$ in $G$. Thus, $G-v$ is plane graph of $8$ vertices which is $\Theta_6$ free. If $G-v$ is $2$-connected, then from previous results, $e(G-v)\leq \frac{18}{7}(n-1)-\frac{46}{7}.$ Thus, $e(G)\leq\frac{18}{7}(n-1)-\frac{46}{7}+2=\frac{18}{7}n-\frac{50}{7}.$  If $G-v$ has two blocks, then $e(G)\leq \frac{18}{7}(n-1)-\frac{46}{7}$. Indeed, the only case where we get $e(G-v)>\frac{18}{7}(n-1)-\frac{46}{7}$ is when one block is of size $4$ and the other is of size $5$ (see Figure \ref{ffr}). But in that case, there is no possible way to join the two blocks with a cherry (the two endvertices of the degree $2$ vertex belong to $K_{4}$ and $K^{-}_{5}$, respectively). If that is so, it is easy to get a $\Theta_6$ in the graph. Therefore, $e(G)\leq \frac{18}{7}(n-1)-\frac{46}{7}$. Thus, $e(G)\leq \frac{18}{7}(n-1)-\frac{46}{7}+2=\frac{18}{7}n-\frac{50}{7}.$ 
Therefore $e(G)\leq \frac{18}{7}n-\frac{50}{7}.$

\begin{figure}[ht]
\centering
\begin{tikzpicture}[scale=0.15]
\draw[thick](-6,0)--(6,0)--(0,8)--(-6,0);
\draw[thick,red](6,0)--(18,0)--(12,10)--(6,0);
\filldraw[black, fill opacity=0.1] plot coordinates{(-6,0)(6,0)(0,8)(-6,0)};
\filldraw[red, fill opacity=0.1] plot coordinates{(6,0)(18,0)(12,10)(6,0)};
\node at (0,3) {$K_4$};
\node at (12,3) {$K_5^- $};
\draw[fill=black](-6,0)circle(15pt);
\draw[fill=black](6,0)circle(15pt);
\draw[fill=black](18,0)circle(15pt);
\draw[fill=black](12,10)circle(15pt);
\draw[fill=black](0,8)circle(15pt);
\end{tikzpicture}
\caption{Structure of a graph on 8 vertices and containing two blocks of size 4 and 5.}
\label{ffr}
\end{figure}

Let $n=9$ and $G$ be an $n$-vertex $\Theta_6$-free plane graph containing two blocks, say $B_1'$ and $B_2'$ with number of vertices $n_1$ and $n_2$ respectively. If $n_1=2$ and $n_2=8$, then $e(B_1')=1$ and $e(B_2')\leq 14$. Thus, $e(G)\leq 15$. That means, $e(G)\leq \frac{18}{7}n-\frac{50}{7}.$  If $n_1=3$ and $n_2=7$, then $e(B_1')=3$ and $e(B_2')\leq 12$. Again in this case,  $e(G)\leq\frac{18}{7}n-\frac{50}{7}$. If $n_1=4$ and $n_2=6$, then $e(B_1')\leq 6$ and $e(B_2')\leq 10$ and again $e(G)\leq \frac{18}{7}n-\frac{50}{7}$. If $n_1=n_2=5$, then $e(B_1)\leq 9$ and $e(B_2)\leq 9$. If $e(B_1')=e(B_2')=9$, then $e(G)=18$ in this case both $B_1'$ and $B_2'$ are $K_5^- $ and hence the structure of the graph is well known (see Figure \ref{fgr}(a)). The remaining only possibility where we have $e(G)>16$ is when one block contains $9$ edges and the other contains $8$ edges. Without loss of generality, assume $e(B_1')=9$ and $e(B_2')=8$. Thus, $B_1'$ is $K_5^- $. Now we need to figure out the structure of $B_2'$. Notice that $B_2'$ misses only one edge not to be a maximal planar graph with $5$ vertices. We denote the block as $K_5^{--}$. Notice that the plane drawing of $B_2'$ contains one $4$-face and the others are all $3$-face. Thus, if the unbounded face of $B_2'$ is a triangle, then the structure of $G$ is as shown in Figure \ref{fgr}(b). If the unbounded face of $B_2'$ is a $4$-face, then the structure of  $G$ is as shown in Figure \ref{fgr}(c).

Let $n=10$, and $G$ be an $n$-vertex $\Theta_6$-free plane graph which is $2$-connected. We want to show that $e(G)\leq \frac{18}{7}n-\frac{54}{7}$. Let $v$ be a vertex of degree $2$. Thus, $G-v$ is a plane graph on $9$ vertices which is also $\Theta_6$-free. If $G-v$ is $2$-connected, then from the previous results, $e(G-v)\leq \frac{18}{7}(n-1)-\frac{50}{7}$. Thus, $e(G)\leq \frac{18}{7}(n-1)-\frac{50}{7}+2=\frac{18}{7}n-\frac{54}{7}$. If $G-v$ is contains two blocks, then from the previous observation there are three possibilities such that $e(G-v)>\frac{18}{7}(n-1)-\frac{50}{7}$. The structure of these graphs are shown in Figure \ref{fgr}(a,b,c). However, in all the cases if there is a cherry joining the two blocks, then it is easy to get a $\Theta_6$ in the graph. Thus, such 
situations could not appear in $G-v$. This implies that $e(G)\leq 16$. In other words, $e(G-v)\leq \frac{18}{7}(n-1)-\frac{50}{7}$. Therefore, $e(G)\leq \frac{18}{7}n-\frac{50}{7}+2=\frac{18}{7}n-\frac{54}{7}$.


\begin{claim}\label{size6} 
Let $G$ be an $n$-vertex $\Theta_6$-free plane graph with a vertex $v$ of degree $2$, if $G-v$ has exactly $2$ blocks, one with size at least $6$, then $e(G) < \frac{18}{7}n - \frac{48}{7}.$
\end{claim}
\begin{proof}
Let $B'_1$ and $B'_2$ be the blocks of $G$ of sizes $n_1$ and $n_2$ respectively, with $n_1 \geq 6$. 
By induction we may assume that $e(B_1) \leq \frac{18n_1}{7} - \frac{38}{7}$ and by Lemma~\ref{de} $e(B'_2) \leq \frac{18n_2}{7} - \frac{27}{7}$, therefore \begin{displaymath}
e(G) \leq e(B_1') + e(B_2') + 2 \leq \frac{18(n_1+n_2)}{7} - \frac{27+38-14}{7} = \frac{18n}{7} - \frac{51}{7} < \frac{18n}{7} - \frac{48}{7}.\qedhere \end{displaymath}
\end{proof}

Now suppose $n\geq 11$. 
If $\delta(G) \geq 3$, by Theorem~\ref{vcb}  $e(G) \leq \frac{18n}{7} - \frac{48}{7}$.
So suppose there is a vertex $v$ in $G$ with degree $d(v) \leq 2$.
If $G-v$ is $2$-connected, then by Theorem $2$ $e(G) \leq \frac{18(n-1)}{7} - \frac{48}{7} + 2 < \frac{18n}{7} - \frac{48}{7}$. 
If $G-v$ contains precisely $2$ blocks of sizes $n_1, n_2$ with $n_1 \geq n_2$, then since $n_1 + n_2 = n$ we have that $n_1 \geq 6$, and so, by Claim~\ref{size6} we have that $e(G) < \frac{18n}{7} - \frac{48}{7}$. 
If $G-v$ contains at least three blocks, then by Claim~\ref{3blocks} $e(G) < \frac{18n}{7} - \frac{48}{7}$.

\end{proof}

\begin{figure}[ht]
\centering
\begin{tikzpicture}[scale=0.15]
\draw[thick](-6,0)--(6,0)--(0,10)--(-6,0);
\draw[thick,red](6,0)--(18,0)--(12,10)--(6,0);
\filldraw[black, fill opacity=0.1] plot coordinates{(-6,0)(6,0)(0,10)(-6,0)};
\filldraw[red, fill opacity=0.1] plot coordinates{(6,0)(18,0)(12,10)(6,0)};
\draw[fill=black](-6,0)circle(15pt);
\draw[fill=black](6,0)circle(15pt);
\draw[fill=black](18,0)circle(15pt);
\draw[fill=black](12,10)circle(15pt);
\draw[fill=black](0,10)circle(15pt);
\node at (6,-4) {$(a)$};
\node at (0,3) {$K_5^- $};
\node at (12,3) {$K_5^- $};
\end{tikzpicture}\qquad\qquad
\begin{tikzpicture}[scale=0.15]
\draw[thick](-6,0)--(6,0)--(0,10)--(-6,0);
\draw[thick,red](6,0)--(18,0)--(12,10)--(6,0);
\filldraw[black, fill opacity=0.1] plot coordinates{(-6,0)(6,0)(0,10)(-6,0)};
\filldraw[red, fill opacity=0.1] plot coordinates{(6,0)(18,0)(12,10)(6,0)};
\draw[fill=black](-6,0)circle(12pt);
\draw[fill=black](6,0)circle(15pt);
\draw[fill=black](18,0)circle(15pt);
\draw[fill=black](12,10)circle(15pt);
\draw[fill=black](0,10)circle(15pt);
\node at (6,-4) {$(b)$};
\node at (0,3) {$K_5^- $};
\node at (12,3) {$K_5^{--}$};
\end{tikzpicture}\qquad\qquad
\begin{tikzpicture}[scale=0.15]
\draw[thick](-6,0)--(6,0)--(0,10)--(-6,0);
\draw[thick,red](6,0)--(18,0)--(12,10)--(6,0)(18,0)--(24,10)--(12,10)(24,10)--(6,0);
\filldraw[black, fill opacity=0.1] plot coordinates{(-6,0)(6,0)(0,10)(-6,0)};
\draw[fill=black](-6,0)circle(15pt);
\draw[fill=black](6,0)circle(15pt);
\draw[fill=black](18,0)circle(15pt);
\draw[fill=black](24,10)circle(15pt);
\draw[fill=black](12,10)circle(15pt);
\draw[fill=black](0,10)circle(15pt);
\draw[fill=black](15,5)circle(15pt);
\node at (6,-4) {$(c)$};
\node at (0,3) {$K_5^- $};
\end{tikzpicture}
\caption{Structure of graphs on 9 vertices and containing two blocks of size 5.}
\label{fgr}
\end{figure}


\begin{lemma}\label{3bound}
Let $G$ be an $n$-vertex, $\Theta_6$-free plane graph containing a block, say $B$ with $n_b$ vertices, such that $e(B)\leq \frac{18}{7}n_b-\frac{48}{7}$. Then $e(G)\leq \frac{18}{7}n-\frac{48}{7}.$
\end{lemma}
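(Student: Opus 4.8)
```latex
\begin{proof}[Proof plan]
The plan is to reduce the general bound to the already-established special bounds for small $2$-connected graphs and for $2$-connected graphs with $\delta(G)\geq 3$, by peeling off the block $B$ and handling the rest by induction. First I would set up the block decomposition of $G$: since $B$ is one block among the blocks $B_1', B_2', \dots, B_b'$ of $G$ with $n_1, n_2, \dots, n_b$ vertices (including cut vertices) and $\sum_i n_i = n + b - 1$, I want to bound $e(G) = \sum_i e(B_i')$. The hypothesis gives one block, say $B = B_j'$, with $e(B_j') \leq \frac{18}{7} n_j - \frac{48}{7}$, which is exactly $\frac{21}{7}$ better than the generic per-block bound $\frac{18}{7}n_i - \frac{27}{7}$ coming from Lemma~\ref{de}. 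Summing Lemma~\ref{de} over the other $b-1$ blocks and the improved bound over $B$ gives
\[
e(G) \leq \sum_{i=1}^{b}\Big(\tfrac{18}{7}n_i - \tfrac{27}{7}\Big) - \tfrac{21}{7}
= \tfrac{18}{7}(n+b-1) - \tfrac{27}{7}b - \tfrac{21}{7}
= \tfrac{18}{7}n - \tfrac{9}{7}b - \tfrac{39}{7}.
\]
For $b \geq 1$ this is at most $\tfrac{18}{7}n - \tfrac{48}{7}$, with equality exactly when $b = 1$, i.e.\ when $G = B$ itself.

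So the only case not immediately settled is $b = 1$, where $G = B$ is itself $2$-connected (or a single edge/triangle for very small $n$, which is covered trivially), and here the hypothesis $e(B) \leq \frac{18}{7}n_b - \frac{48}{7}$ with $n_b = n$ is precisely the desired conclusion. I would also check the degenerate low-vertex cases ($n \leq 2$, or $B$ a $K_2$, $K_3$, $K_4$, or a $5$-vertex block) separately, but those are handled by the explicit inequalities $e(B_i') \leq \frac{18}{7}n_i - \frac{27}{7}$ already recorded in the proof of Lemma~\ref{de}, together with the even stronger $\frac{48}{7}$-type bounds for small $2$-connected graphs from Lemma~\ref{2bound}.

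I do not expect a serious obstacle here: the lemma is essentially a bookkeeping statement that a single ``good'' block propagates its surplus of $\frac{21}{7}$ through the block-tree, and every other block can afford the weaker Lemma~\ref{de} bound because the $-\frac{9}{7}b$ slack from having $b$ blocks only helps. The one point requiring a little care is making sure the block containing the good bound is genuinely one of the blocks of $G$ (not, say, a proper subgraph of a block), so that the decomposition $e(G) = \sum_i e(B_i')$ applies cleanly; this is guaranteed by the phrasing of the hypothesis, which speaks of ``a block $B$ of $G$.'' Once the block-sum inequality above is written down, the conclusion $e(G) \leq \frac{18}{7}n - \frac{48}{7}$ is immediate for all $b \geq 1$.
\end{proof}
```
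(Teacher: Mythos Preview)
Your argument is correct and is in fact a cleaner packaging of the paper's own proof. Both proofs rest on the same two ingredients: the hypothesis $e(B)\le \frac{18}{7}n_b-\frac{48}{7}$ for the distinguished block, and Lemma~\ref{de} for everything else. The paper, however, first invokes Lemma~\ref{blocks} to dispose of $b\ge 4$, and for $b\le 3$ splits into cases according to whether $B$ contains $0$, $1$, or $2$ cut vertices, each time deleting $V(B)$ minus its cut vertices and applying Lemma~\ref{de} to the remaining piece(s). Your single summation
\[
e(G)\le \sum_{i=1}^b\Bigl(\tfrac{18}{7}n_i-\tfrac{27}{7}\Bigr)-\tfrac{21}{7}
=\tfrac{18}{7}n-\tfrac{9b+39}{7}
\]
handles all $b\ge 1$ at once and makes the case split unnecessary. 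Two small remarks: the identity $\sum_i n_i=n+b-1$ you use assumes $G$ is connected, which the lemma statement does not; this is harmless since Section~5 has already reduced to the connected case (and for $c$ components the identity $\sum_i n_i=n+b-c$ only strengthens the bound). Also, your appeal to Lemma~\ref{2bound} for ``degenerate low-vertex cases'' is not needed---your block-sum inequality already covers every $b\ge 1$, including $b=1$ where the hypothesis is the conclusion.
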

\begin{proof}
Let $G$ contains $b$ blocks. If $b\geq 4$, then by Lemma \ref{blocks}, the statement holds. Suppose that $G$ contains at most $3$ blocks. The number of cut vertices contained in $B$ is at most $2$. We consider two cases:
\begin{enumerate}
   \item $B$ contains no cut vertex. In this case $G=B$ and hence, $e(G)\leq \frac{18}{7}n-\frac{48}{7}.$
    \item $B$ contains only one cut vertex. Let $G'$ be the graph obtained by deleting all vertices of $B$ except the cut vertex. Clearly, $v(G')=n-(n_b-1).$ Using Lemma \ref{de}, $e(G')\leq \frac{18}{7}(n-(n_b-1))-\frac{27}{7}$. Therefore,
     \begin{align*}
      e(G)=e(G')+e(B)\leq \frac{18}{7}(n-(n_b-1))-\frac{27}{7}+\frac{18}{7}n_b-\frac{48}{7}= \frac{18}{7}n-\frac{57}{7}.
    \end{align*}
     Therefore, $e(G)\leq \frac{18}{7}n-\frac{48}{7}.$
   \item $B$ contains two cut vertices. Let $G'$ be the graph obtained by deleting all vertices of $B$  except the two cut vertices. Then $G'$ is a disconnected graph with two components, say $B_1'$ and $B_2'$ with number of vertices $n_1$ and $n_2$ respectively. Clearly $n_1+n_2=n-(n_b-2)$.
    Using Lemma \ref{de}, $e(B_1')\leq \frac{18}{7}n_1-\frac{27}{7}$ and $e(B_2')\leq \frac{18}{7}n_2-\frac{27}{7}$. Hence, 
   \begin{align*}
    e(G')=e(B_1')+e(B_2')&\leq \left(\frac{18}{7}n_1-\frac{27}{7}\right)+\left(\frac{18}{7}n_2-\frac{27}{7}\right)\leq \frac{18}{7}(n_1+n_2)-\frac{54}{7}\\&=\frac{18}{7}(n-(n_b-2))-\frac{54}{7}.   
    \end{align*}
    Thus,
    \begin{align*}
    e(G)=e(G')+e(B)\leq\frac{18}{7}(n-(n_b-2))-\frac{54}{7}+\frac{18}{7}n_b-\frac{48}{7}= \frac{18}{7}n-\frac{66}{7}.
    \end{align*}  
   Therefore, $e(G)\leq \frac{18}{7}n-\frac{48}{7}.$  \qedhere
\end{enumerate}
\end{proof}

Now we give the proof of Theorem \ref{bvc}. Notice that $G$ contains at least $14$ vertices. If $G$ is $2$-connected, then we are done by Lemma \ref{2bound}. Thus we suppose that $G$ contains at least $2$ blocks and at most $3$ blocks. Otherwise, we are done by Lemma \ref{blocks}. So, we distinguish two cases:
\begin{enumerate}
    \item $G$ contains only two blocks. Let the blocks be $B_1'$ and $B_2'$ with number of vertices $n_1$ and $n_2$ respectively, such that $n_1 \geq n_2$. 
    If $n_1\geq 9$, then by Lemma \ref{2bound} and Lemma \ref{3bound}, $e(G)\leq \frac{18}{7}n-\frac{48}{7}.$ 
    
    Since $n+1=n_1+n_2$, we have that if $n\geq 16$ then $n_1 \geq 9$.
    We may assume that either $n= 14$ and $n_1=8,n_2 = 7$ or $n=15$ and $n_1 = 8,n_2=8$.
    
    In the first case  by Lemma \ref{2bound}, $e(B_1')\leq \frac{18}{7}n_2-\frac{46}{7}$ and $e(B_2')\leq \frac{18}{7}n_1-\frac{42}{7}$. 
    Therefore, 
    \begin{align*}
        e(G)=e(B_1')+e(B_2')&\leq \frac{18}{7}(n_1+n_2)-\frac{88}{7}\\&=\frac{18}{7}(n+1)-\frac{88}{7}=\frac{18}{7}n-\frac{70}{7}< \frac{18}{7}n-\frac{48}{7}.
    \end{align*}
    
    In the second case by Lemma \ref{2bound}, $e(B_1')\leq \frac{18}{7}n_2-\frac{46}{7}$ and $e(B_2')\leq \frac{18}{7}n_2-\frac{46}{7}$. 
    Therefore, 
    \begin{align*}
        e(G)=e(B_1')+e(B_2')&\leq \frac{18}{7}(n_1+n_2)-\frac{92}{7}\\&=\frac{18}{7}(n+1)-\frac{92}{7}=\frac{18}{7}n-\frac{74}{7}< \frac{18}{7}n-\frac{48}{7}.
    \end{align*}
    
    \item $G$ contains three blocks. Let the blocks be $B_1', B_2'$ and $B_3'$ with number of vertices $n_1, n_2$ and $n_3$ respectively, such that $n_1 \geq n_2 \geq n_3$. Since $n+2=n_1+n_2+n_3$, and $n\geq 14$ we have that $n_1 \geq 6$, hence,  
     by Lemma \ref{2bound}, $e(B_1')\leq \frac{18}{7}n_1-\frac{38}{7}$. 
     From Lemma \ref{de}, $e(B_i')\leq \frac{18}{7}n_2-\frac{27}{7}$, $i\in \{2,3\}$. 
     Thus, 
    \begin{align*}
    e(G)=e(B_1')+e(B_2')+e(B_3')&\leq\frac{18}{7}(n_1+n_2+n_3)-\frac{27}{7}-\frac{27}{7}-\frac{38}{7}\\&=\frac{18}{7}(n+2)-\frac{92}{7}=\frac{18}{7}n-\frac{56}{7}\\&< \frac{18}{7}n-\frac{48}{7}.
    \end{align*}
\end{enumerate}
Notice that for $n=13$, we have a counter example for which Theorem \ref{bvc} does not hold. One counter example is the graph $G$ shown in Figure \ref{dfc}, where the graph contains $27$ edges but $e(G)>\frac{18}{7}n-\frac{48}{7}.$
\begin{figure}[ht]
\centering
\begin{tikzpicture}[scale=0.2]
\draw[fill=black](0,0)circle(12pt);
\draw[fill=black](0,5)circle(12pt);
\draw[fill=black](0,10)circle(12pt);
\draw[fill=black](-6,-4)circle(12pt);
\draw[fill=black](6,-4)circle(12pt);
\draw[fill=black](12,0)circle(12pt);
\draw[fill=black](12,5)circle(12pt);
\draw[fill=black](12,10)circle(12pt);
\draw[fill=black](6,-4)circle(12pt);
\draw[fill=black](18,-4)circle(12pt);
\draw[fill=black](24,0)circle(12pt);
\draw[fill=black](24,5)circle(12pt);
\draw[fill=black](24,10)circle(12pt);
\draw[fill=black](30,-4)circle(12pt);
\draw[fill=black](18,-4)circle(12pt);
\draw[thick](0,0)--(0,5)--(0,10)--(6,-4)--(-6,-4)--(0,10)(-6,-4)--(0,0)--(6,-4)--(0,5)--(-6,-4);
\draw[thick](12,0)--(12,5)--(12,10)--(6,-4)--(18,-4)--(12,10)(18,-4)--(12,0)--(6,-4)--(12,5)--(18,-4);
\draw[thick](24,0)--(24,5)--(24,10)--(30,-4)--(18,-4)--(24,10)(18,-4)--(24,0)--(24,10)--(24,5)--(30,-4)(18,-4)--(24,5)(30,-4)--(24,0);
\end{tikzpicture}
\caption{Maximal counter example}
\label{dfc}
\end{figure}
\section{Concluding Remarks and Conjectures}
As we know, $\Theta_k$ is the family of Theta graphs on $k$ vertices, that is, graphs obtained from the $k$-cycle by adding an additional edge joining two non-consecutive vertices. In particular, $\Theta_6=\{\Theta_6^1, \Theta_6^2\}$, where $\Theta_6^1$ and $\Theta_6^2$  are the symmetric and asymmetric $\Theta_6$-graphs which are shown in Figure \ref{teta}(left) and Figure \ref{teta}(right) respectively. One may ask what the planar Tur\'an number of $\Theta_6^1$ and $\Theta_6^2$ are. What can be said about the tight upper bounds of   $\text{ex}_{\mathcal{P}}(n, \Theta_6^1)$ and $\text{ex}_{\mathcal{P}}(n,\Theta_6^2)$? 

We pose the following asymptotic conjectures.
\newpage
\begin{conjecture}\
\begin{enumerate}
    \item $\ex_{\mathcal{P}}(n,\Theta_6^1)= \frac{45}{17}n+\Theta(1).$ 
    \item $\ex_{\mathcal{P}}(n,\Theta_6^2)= \frac{18}{7}n+\Theta(1).$ 
\end{enumerate}
\end{conjecture}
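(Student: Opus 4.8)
The plan is to prove each part as a matching pair---a lower bound from an explicit construction and an upper bound from a discharging argument---reusing the triangular-block machinery of Sections 3--4, since forbidding a single member of $\Theta_6$ changes only the admissible local configurations and the target constants, not the method itself. The organizing observation is that $\Theta_6^2=\Theta_{1,2,4}$ contains a triangle whereas $\Theta_6^1=\Theta_{1,3,3}$ is triangle-free. This is exactly what should make forbidding $\Theta_6^2$ alone asymptotically as strong as forbidding the whole family (slope $\tfrac{18}{7}$), while forbidding the triangle-free $\Theta_6^1$ alone is strictly weaker (slope $\tfrac{45}{17}>\tfrac{18}{7}$). Indeed, many of the $\Theta_6$'s exhibited in the claims of Section 4 are already of the asymmetric type $\Theta_6^2$, which is suggestive evidence that the $\tfrac{18}{7}$ bound is governed by $\Theta_6^2$ and survives when only it is forbidden.

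For the upper bounds I would again seek a combination $A\,f(G)-B\,e(G)+C\,v(G)\le O(1)$ which, after substituting Euler's identity $e=f+v-2$, collapses to the desired slope; writing the slope as $\tfrac{g}{g-2}$ for an effective girth $g$, the value $\tfrac{45}{17}$ corresponds to $g=\tfrac{45}{14}$ and $\tfrac{18}{7}$ to $g=\tfrac{36}{11}$. The first step is to rebuild the block catalogue. As in the size lemma, one checks that a single forbidden $\Theta_6^i$ already rules out every $2$-connected triangular block on six vertices---for part (1) because such a block always contains two $4$-cycles sharing an edge, i.e.\ a $\Theta_6^1$, and for part (2) because it contains a triangle together with a vertex-disjoint length-$4$ path between two of its vertices, i.e.\ a $\Theta_6^2$---so blocks still have order at most $5$ and the list $B_{5,a},\dots,B_2$ is unchanged. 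The genuine work, and the point at which the two cases diverge, is the re-derivation of which faces two blocks may share. Forbidding only $\Theta_6^1$ should permit certain quadrilateral faces between triangular blocks that were previously excluded (such a quadrilateral completes a triangle-plus-$5$-cycle, i.e.\ the now-permitted $\Theta_6^2$), so the refinement and bad-cherry bookkeeping must be redone to admit $4$-faces; this lowers each block's face contribution $f(B)$ and loosens the discharging inequality to slope $\tfrac{45}{17}$. Forbidding only $\Theta_6^2$ should instead keep these short-face configurations essentially as restricted as for the full family, and the discharging then reproduces $\tfrac{18}{7}$ up to an additive constant.

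For the lower bounds, part (2) is immediate: the graph of Section 2 is $\Theta_6$-free, hence $\Theta_6^2$-free, and already realizes $\tfrac{18}{7}n-\tfrac{48}{7}$, so only the upper bound is in doubt there. For part (1) I would construct a periodic planar graph from $K_5^-$ clusters (that is, $B_{5,a}$ blocks) joined so that every non-triangular face is a quadrilateral, with the proportion of quadrilateral to triangular faces equal to $3:11$---the exact ratio that $e/v\to\tfrac{45}{17}$ forces through $2e=\sum_f\operatorname{len}(f)$ and Euler's formula. One must then verify that this gadget is $\Theta_6^1$-free, i.e.\ that no two of its $4$-cycles share an edge while together spanning six vertices; the reduced denominator $17$ of $\tfrac{45}{17}$ suggests a fundamental cell on about $17$ vertices carrying $45$ edges.

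The main obstacle is pinning down the \emph{exact} leading constant rather than merely an $O(n)$ bound, and this is hardest for part (1). Since $\Theta_6^1$ is triangle-free, triangles impose no local constraint whatsoever, so the entire combinatorial content lies in classifying precisely which arrangements of quadrilateral and longer faces around the blocks avoid two edge-sharing $4$-cycles on six vertices, and then proving that the densest such arrangement has quad-to-triangle ratio exactly $3:11$ and not some nearby value such as would give $\tfrac{12}{5}$. I expect the construction to be the more delicate half; once the admissible $4$-face adjacencies are fixed, the discharging inequality should follow by the same block-by-block case analysis as in Section 4, with any exceptional positive-$g(B)$ blocks again absorbed into small groups of neighbouring trivial blocks, exactly as in the treatment of $B_{4,b}$.
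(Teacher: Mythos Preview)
This statement is presented in the paper as a \emph{conjecture}; the authors do not prove it. They supply only the lower-bound construction for part~(1), and the lower bound for part~(2) is, as you say, inherited from the extremal graphs of Section~2. No upper-bound argument is offered, so there is nothing in the paper to compare your discharging sketch against --- that programme is plausible, but it remains the open content of the conjecture.

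Your proposal does contain a concrete error in the lower bound for part~(1). You suggest assembling $K_5^-$ blocks with all non-triangular faces \emph{quadrilaterals}, in ratio $3{:}11$ to the triangular faces. The Euler arithmetic for slope $45/17$ is correct, but such a graph is not $\Theta_6^1$-free. If $abcd$ is a $4$-face with $ab$ an exterior edge of a block $B\cong K_5^-$ and $c,d\notin V(B)$, then $B$ contains a $4$-cycle through $ab$ on two further vertices of $B$ (any four vertices of $K_5^-$ span a $K_4$ or a $K_4^-$, and at least one of the three available pairs yields a $K_4$); that internal $4$-cycle and the facial $4$-cycle $abcd$ share exactly the edge $ab$ and span six vertices, which is precisely a $\Theta_6^1$. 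A short case check gives the same conclusion when the $4$-face has three vertices on $B$. So quadrilateral faces adjacent to $K_5^-$ blocks cannot occur.

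The paper's construction instead makes every non-triangular face a \emph{pentagon}. It is a recursive gadget (Figure~\ref{fgv}) in which each shaded triangular region is filled by a $K_5^-$ and each $5$-face is bordered by five distinct $K_5^-$'s; one iterates by identifying the inner shaded pentagon of one copy with the outer pentagon of the next. A $5$-face sharing an edge with a $4$-cycle inside a neighbouring block produces a theta graph on seven vertices rather than six, so $\Theta_6^1$ is avoided. With only $3$- and $5$-faces the Euler calculation gives $f_5{:}f_3=3{:}25$ at density $45/17$, which this recursion realises. That is the construction you should substitute for the quadrilateral idea; the matching upper bounds, for both parts, are not established in the paper.
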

The lower bound for $\text{ex}_{\mathcal{P}}(n, \Theta_6^1)$ is a recursive construction. The base construction is obtained by identifying the outer pentagon and the boundary of the shaded inner pentagon of Figure \ref{fgv}(a) and (b) respectively. Each of the triangular region (shaded region) in the construction is a $K_5^{-}$. Notice that, every non-triangular face in the construction is of size 5 and is surrounded by five $K_5^{-}$'s. It can be checked that the graph contains no $\Theta_6^1$ and has as many edges as indicated in the bound.  
\begin{figure}[ht]
\centering
\begin{tikzpicture}[scale=0.15]

\draw[ultra thick](8.6,5)--(0,10)--(-8.6,5)--(-8.6,-5)--(0,-10)--(8.6,-5)--(8.6,5);
\draw[ultra thick,red](8.6,5)--(-8.6,-5);
\draw[fill=black](8.6,5)circle(18pt);
\draw[fill=black](0,10)circle(18pt);
\draw[fill=black](-8.6,5)circle(18pt);
\draw[fill=black](-8.6,-5)circle(18pt);
\draw[fill=black](0,-10)circle(18pt);
\draw[fill=black](8.6,-5)circle(18pt);
\node at (0,-15) {$\Theta_6^1$};
\end{tikzpicture}\qquad\qquad\qquad
\begin{tikzpicture}[scale=0.15]
\draw[ultra thick](8.6,5)--(0,10)--(-8.6,5)--(-8.6,-5)--(0,-10)--(8.6,-5)--(8.6,5);
\draw[ultra thick,red](-8.6,-5)--(0,10);
\draw[fill=black](8.6,5)circle(18pt);
\draw[fill=black](0,10)circle(18pt);
\draw[fill=black](-8.6,5)circle(18pt);
\draw[fill=black](-8.6,-5)circle(18pt);
\draw[fill=black](0,-10)circle(18pt);
\draw[fill=black](8.6,-5)circle(18pt);
\node at (0,-15) {$\Theta_6^2$};
\end{tikzpicture}
\caption{$\Theta_6$-graphs}
\label{teta}
\end{figure}

\begin{figure}[h]
\centering
\begin{tikzpicture}[scale=0.06]
\begin{scope}[yshift=-7cm,rotate=180]
\filldraw[red, fill opacity=0.1] plot coordinates{(9.5,3.1)(8.8,12.1)(0,10)(9.5,3.1)};
\filldraw[red, fill opacity=0.1] plot coordinates{(-9.5,3.1)(-8.8,12.1)(0,10)(-9.5,3.1)};
\filldraw[red, fill opacity=0.1] plot coordinates{(9.5,3.1)(14.3,-4.6)(5.9,-8.1)(9.5,3.1)};
\filldraw[red, fill opacity=0.1] plot coordinates{(-9.5,3.1)(-14.3,-4.6)(-5.9,-8.1)(-9.5,3.1)};
\filldraw[red, fill opacity=0.1] plot coordinates{(5.9,-8.1)(0,-15)(-5.9,-8.1)(5.9,-8.1)};
\filldraw[red, fill opacity=0.1] plot coordinates{(16.2,11.8)(8.8,12.1)(6.2,19)(16.2,11.8)};
\filldraw[red, fill opacity=0.1] plot coordinates{(-16.2,11.8)(-8.8,12.1)(-6.2,19)(-16.2,11.8)};
\filldraw[red, fill opacity=0.1] plot coordinates{(-20,0)(-14.3,-4.6)(-16.2,-11.8)(-20,0)};
\filldraw[red, fill opacity=0.1] plot coordinates{(20,0)(14.3,-4.6)(16.2,-11.8)(20,0)};
\filldraw[red, fill opacity=0.1] plot coordinates{(6.2,19)(0,30)(-6.2,19)(6.2,19)};
\filldraw[red, fill opacity=0.1] plot coordinates{(-20,0)(-28.5,9.3)(-16.2,11.8)(-20,0)};
\filldraw[red, fill opacity=0.1] plot coordinates{(20,0)(28.5,9.3)(16.2,11.8)(20,0)};
\filldraw[red, fill opacity=0.1] plot coordinates{(-6.2,-19)(0,-15)(6.2,-19)(-6.2,-19)};
\filldraw[red, fill opacity=0.1] plot coordinates{(-16.2,-11.8)(-17.6,-24.3)(-6.2,-19)(-16.2,-11.8)};
\filldraw[red, fill opacity=0.1] plot coordinates{(16.2,-11.8)(17.6,-24.3)(6.2,-19)(16.2,-11.8)};
\filldraw[red, fill opacity=0.1] plot coordinates{(26.5,36.4)(0,30)(-26.5,36.4)(26.5,36.4)};
\filldraw[red, fill opacity=0.1] plot coordinates{(-26.5,36.4)(-28.5,9.3)(-42.8,-13.9)(-26.5,36.4)};
\filldraw[red, fill opacity=0.1] plot coordinates{(26.5,36.4)(28.5,9.3)(42.8,-13.9)(26.5,36.4)};
\filldraw[red, fill opacity=0.1] plot coordinates{(-42.8,-13.9)(-17.6,-24.3)(0,-45)(-42.8,-13.9)};
\filldraw[red, fill opacity=0.1] plot coordinates{(42.8,-13.9)(17.6,-24.3)(0,-45)(42.8,-13.9)};
\filldraw[ultra thick,blue,pattern=dots,pattern color=blue](9.5,3.1)--(0,10)--(-9.5,3.1)--(-5.9,-8.1)--(5.9,-8.1)--(9.5,3.1);
\draw[thick](9.5,3.1)--(8.8,12.1)--(0,10);
\draw[thick](-9.5,3.1)--(-8.8,12.1)--(0,10);
\draw[thick](9.5,3.1)--(14.3,-4.6)--(5.9,-8.1);
\draw[thick](-9.5,3.1)--(-14.3,-4.6)--(-5.9,-8.1);
\draw[thick](5.9,-8.1)--(0,-15)--(-5.9,-8.1);
\draw[thick](20,0)--(16.2,11.8)--(6.2,19)--(-6.2,19)--(-6.2,19)--(-16.2,11.8)--(-20,0)--(-16.2,-11.8)--(-6.2,-19)--(6.2,-19)--(16.2,-11.8)--(20,0);
\draw[thick](16.2,11.8)--(8.8,12.1)--(6.2,19);
\draw[thick](-16.2,11.8)--(-8.8,12.1)--(-6.2,19);
\draw[thick](-20,0)--(-14.3,-4.6)--(-16.2,-11.8);
\draw[thick](20,0)--(14.3,-4.6)--(16.2,-11.8);
\draw[thick](-6.2,-19)--(0,-15)--(6.2,-19);
\draw[thick](20,0)--(28.5,9.3)--(16.2,11.8);
\draw[thick](6.2,19)--(0,30)--(-6.2,19);
\draw[thick](-20,0)--(-28.5,9.3)--(-16.2,11.8);
\draw[thick](-16.2,-11.8)--(-17.6,-24.3)--(-6.2,-19);
\draw[thick](16.2,-11.8)--(17.6,-24.3)--(6.2,-19);
\draw[thick](28.5,9.3)--(26.5,36.4)--(0,30)--(-26.5,36.4)--(-28.5,9.3)--(-42.8,-13.9)--(-17.6,-24.3)--(0,-45)--(17.6,-24.3)--(42.8,-13.9)--(28.5,9.3);
\draw[ultra thick, red](26.5,36.4)--(-26.5,36.4)--(-42.8,-13.9)--(0,-45)--(42.8,-13.9)--(26.5,36.4);
\draw[fill=black](0,10)circle(40pt);
\draw[fill=black](9.5,3.1)circle(40pt);
\draw[fill=black](-9.5,3.1)circle(40pt);
\draw[fill=black](5.9,-8.1)circle(40pt);
\draw[fill=black](-5.9,-8.1)circle(40pt);
\draw[fill=black](8.8,12.1)circle(40pt);
\draw[fill=black](-8.8,12.1)circle(40pt);
\draw[fill=black](14.3,-4.6)circle(40pt);
\draw[fill=black](-14.3,-4.6)circle(40pt);
\draw[fill=black](0,-15)circle(40pt);
\draw[fill=black](20,0)circle(40pt);
\draw[fill=black](16.2,11.8)circle(40pt);
\draw[fill=black](6.2,19)circle(40pt);
\draw[fill=black](-6.2,19)circle(40pt);
\draw[fill=black](-16.2,11.8)circle(40pt);
\draw[fill=black](-20,0)circle(40pt);
\draw[fill=black](-16.2,-11.8)circle(40pt);
\draw[fill=black](-6.2,-19)circle(40pt);
\draw[fill=black](6.2,-19)circle(40pt);
\draw[fill=black](16.2,-11.8)circle(40pt);
\draw[fill=black](28.5,9.3)circle(40pt);
\draw[fill=black](0,30)circle(40pt);
\draw[fill=black](-28.5,9.3)circle(40pt);
\draw[fill=black](-17.6,-24.3)circle(40pt);
\draw[fill=black](17.6,-24.3)circle(40pt);
\draw[fill=black](26.5,36.4)circle(40pt);
\draw[fill=black](-26.5,36.4)circle(40pt);
\draw[fill=black](-42.8,-13.9)circle(40pt);
\draw[fill=black](42.8,-13.9)circle(40pt);
\draw[fill=black](0,-45)circle(40pt);
\end{scope}
\node at (0,-50) {(a)};
\end{tikzpicture}\qquad
\begin{tikzpicture}[scale=0.06]
\filldraw[red, fill opacity=0.1] plot coordinates{(-7.3,-34)(0,-45)(7.3,-34)(-7.3,-34)};
\filldraw[red, fill opacity=0.1] plot coordinates{(-34.8,-3.7)(-42.8,-13.9)(-30.3,-17.5)(-34.8,-3.7)};
\filldraw[red, fill opacity=0.1] plot coordinates{(34.8,-3.7)(42.8,-13.9)(30.3,-17.5)(34.8,-3.7)};
\filldraw[red, fill opacity=0.1] plot coordinates{(-7.3,-34)(0,-45)(7.3,-34)(-7.3,-34)};
\filldraw[red, fill opacity=0.1] plot coordinates{(14.2,32)(26.5,36.4)(26,23.4)(14.2,32)};
\filldraw[red, fill opacity=0.1] plot coordinates{(-14.2,32)(-26.5,36.4)(-26,23.4)(-14.2,32)};
\filldraw[red, fill opacity=0.1] plot coordinates{(33.3,10.8)(26,23.4)(18.6,16.7)(33.3,10.8)};
\filldraw[red, fill opacity=0.1] plot coordinates{(14.2,32)(0,35)(10.2,23)(14.2,32)};
\filldraw[red, fill opacity=0.1] plot coordinates{(-33.3,10.8)(-26,23.4)(-18.6,16.7)(-33.3,10.8)};
\filldraw[red, fill opacity=0.1] plot coordinates{(-14.2,32)(0,35)(-10.2,23)(-14.2,32)};
\filldraw[red, fill opacity=0.1] plot coordinates{(-33.3,10.8)(-34.8,-3.7)(-24.9,-2.7)(-33.3,10.8)};
\filldraw[red, fill opacity=0.1] plot coordinates{(-30.3,-17.5)(-21.7,-12.5)(-20.6,-28.3)(-30.3,-17.5)};
\filldraw[red, fill opacity=0.1] plot coordinates{(-20.6,-28.3)(-7.3,-34)(-5.2,-24.5)(-20.6,-28.3)};
\filldraw[red, fill opacity=0.1] plot coordinates{(33.3,10.8)(34.8,-3.7)(24.9,-2.7)(33.3,10.8)};
\filldraw[red, fill opacity=0.1] plot coordinates{(30.3,-17.5)(21.7,-12.5)(20.6,-28.3)(30.3,-17.5)};
\filldraw[red, fill opacity=0.1] plot coordinates{(20.6,-28.3)(7.3,-34)(5.2,-24.5)(20.6,-28.3)};
\filldraw[red, fill opacity=0.1] plot coordinates{(11.1,10)(18.6,16.7)(11.8,16.2)(11.1,10)};
\filldraw[red, fill opacity=0.1] plot coordinates{(6.1,13.7)(11.8,16.2)(10.2,23)(6.1,13.7)};
\filldraw[red, fill opacity=0.1] plot coordinates{(-11.1,10)(-18.6,16.7)(-11.8,16.2)(-11.1,10)};
\filldraw[red, fill opacity=0.1] plot coordinates{(-6.1,13.7)(-11.8,16.2)(-10.2,23)(-6.1,13.7)};
\filldraw[red, fill opacity=0.1] plot coordinates{(-24.9,-2.7)(-14.9,-1.6)(-19,-6.2)(-24.9,-2.7)};
\filldraw[red, fill opacity=0.1] plot coordinates{(-19,-6.2)(-13,-7.5)(-21.7,-12.5)(-19,-6.2)};
\filldraw[red, fill opacity=0.1] plot coordinates{(24.9,-2.7)(14.9,-1.6)(19,-6.2)(24.9,-2.7)};
\filldraw[red, fill opacity=0.1] plot coordinates{(19,-6.2)(13,-7.5)(21.7,-12.5)(19,-6.2)};
\filldraw[red, fill opacity=0.1] plot coordinates{(-5.2,-24.5)(-3.1,-14.7)(0,-20)(-5.2,-24.5)};
\filldraw[red, fill opacity=0.1] plot coordinates{(5.2,-24.5)(3.1,-14.7)(0,-20)(5.2,-24.5)};
\filldraw[red, fill opacity=0.1] plot coordinates{(14.9,-1.6)(11.1,10)(9.5,3.1)(14.9,-1.6)};
\filldraw[red, fill opacity=0.1] plot coordinates{(-6.1,13.7)(6.1,13.7)(0,10)(-6.1,13.7)};
\filldraw[red, fill opacity=0.1] plot coordinates{(-14.9,-1.6)(-11.1,10)(-9.5,3.1)(-14.9,-1.6)};
\filldraw[red, fill opacity=0.1] plot coordinates{(-13,-7.5)(-3.1,-14.7)(-5.9,-8.1)(-13,-7.5)};
\filldraw[red, fill opacity=0.1] plot coordinates{(13,-7.5)(3.1,-14.7)(5.9,-8.1)(13,-7.5)};
\draw[ultra thick,blue](26.5,36.4)--(-26.5,36.4)--(-42.8,-13.9)--(0,-45)--(42.8,-13.9)--(26.5,36.4);
\draw[thick](-34.8,-3.7)--(-42.8,-13.9)--(-30.3,-17.5);
\draw[thick](34.8,-3.7)--(42.8,-13.9)--(30.3,-17.5);
\draw[thick](-7.3,-34)--(0,-45)--(7.3,-34);
\draw[thick](-20.6,-28.3)--(-7.3,-34)--(-5.2,-24.5)(20.6,-28.3)--(7.3,-34)--(5.2,-24.5)(-7.3,-34)--(7.3,-34);
\draw[thick](33.3,10.8)--(34.8,-3.7)--(24.9,-2.7)(20.6,-28.3)--(30.3,-17.5)--(21.7,-12.5)(30.3,-17.5)--(34.8,-3.7);
\draw[thick](-33.3,10.8)--(-34.8,-3.7)--(-24.9,-2.7)(-20.6,-28.3)--(-30.3,-17.5)--(-21.7,-12.5)(-30.3,-17.5)--(-34.8,-3.7);
\draw[thick](-10.2,23)--(-14.2,32)--(0,35)(-33.3,10.8)--(-26,23.4)--(-18.6,16.7)(-14.2,32)--(-26,23.4);
\draw[thick](10.2,23)--(14.2,32)--(0,35)(33.3,10.8)--(26,23.4)--(18.6,16.7)(14.2,32)--(26,23.4);
\draw[thick](-19,-6.2)--(-21.7,-12.5)--(-13,-7.5)(-3.1,-14.7)--(-5.2,-24.5)--(0,-20)(19,-6.2)--(21.7,-12.5)--(13,-7.5)(3.1,-14.7)--(5.2,-24.5)--(0,-20);
\draw[thick](-11.8,16.2)--(-10.2,23)--(-6.1,13.7)(11.8,16.2)--(10.2,23)--(6.1,13.7);
\draw[thick](-11.8,16.2)--(-18.6,16.7)--(-11.1,10)(-19,-6.2)--(-24.9,-2.7)--(-14.9,-1.6);
\draw[thick](11.8,16.2)--(18.6,16.7)--(11.1,10)(19,-6.2)--(24.9,-2.7)--(14.9,-1.6);
\draw[thick](14.9,-1.6)--(11.1,10)(6.1,13.7)--(-6.1,13.7)(-13,-7.5)--(-3.1,-14.7)(3.1,-14.7)--(13,-7.5)(-14.9,-1.6)--(-11.1,10);
\draw[thick](-5.9,-8.1)--(-3.1,-14.7)--(0,-20)--(3.1,-14.7)--(5.9,-8.1);
\draw[thick](9.5,3.1)--(14.9,-1.6)--(19,-6.2)--(13,-7.5)--(5.9,-8.1);
\draw[thick](-9.5,3.1)--(-14.9,-1.6)--(-19,-6.2)--(-13,-7.5)--(-5.9,-8.1);
\draw[thick](-9.5,3.1)--(-11.1,10)--(-11.8,16.2)--(-6.1,13.7)--(0,10);
\draw[thick](9.5,3.1)--(11.1,10)--(11.8,16.2)--(6.1,13.7)--(0,10);
\draw[ultra thick,red,pattern=dots,pattern color=red](9.5,3.1)--(0,10)--(-9.5,3.1)--(-5.9,-8.1)--(5.9,-8.1)--(9.5,3.1);
\draw[fill=black](9.5,3.1)circle(40pt);
\draw[fill=black](0,10)circle(40pt);
\draw[fill=black](-9.5,3.1)circle(40pt);
\draw[fill=black](-5.9,-8.1)circle(40pt);
\draw[fill=black](5.9,-8.1)circle(40pt);
\draw[fill=black](9.5,3.1)circle(40pt);
\draw[fill=black](11.1,10)circle(40pt);
\draw[fill=black](11.8,16.2)circle(40pt);
\draw[fill=black](6.1,13.7)circle(40pt);
\draw[fill=black](-11.1,10)circle(40pt);
\draw[fill=black](-11.8,16.2)circle(40pt);
\draw[fill=black](-6.1,13.7)circle(40pt);
\draw[fill=black](-14.9,-1.6)circle(40pt);
\draw[fill=black](-19,-6.2)circle(40pt);
\draw[fill=black](-13,-7.5)circle(40pt);
\draw[fill=black](14.9,-1.6)circle(40pt);
\draw[fill=black](19,-6.2)circle(40pt);
\draw[fill=black](13,-7.5)circle(40pt);
\draw[fill=black](-3.1,-14.7)circle(40pt);
\draw[fill=black](0,-20)circle(40pt);
\draw[fill=black](3.1,-14.7)circle(40pt);
\draw[fill=black](18.6,16.7)circle(40pt);
\draw[fill=black](24.9,-2.7)circle(40pt);
\draw[fill=black](-18.6,16.7)circle(40pt);
\draw[fill=black](-24.9,-2.7)circle(40pt);
\draw[fill=black](-10.2,23)circle(40pt);
\draw[fill=black](10.2,23)circle(40pt);
\draw[fill=black](-21.7,-12.5)circle(40pt);
\draw[fill=black](-5.2,-24.5)circle(40pt);
\draw[fill=black](21.7,-12.5)circle(40pt);
\draw[fill=black](5.2,-24.5)circle(40pt);
\draw[fill=black](33.3,10.8)circle(40pt);
\draw[fill=black](0,35)circle(40pt);
\draw[fill=black](-33.3,10.8)circle(40pt);
\draw[fill=black](-20.6,-28.3)circle(40pt);
\draw[fill=black](20.6,-28.3)circle(40pt);
\draw[thick](18.6,16.7)--(33.3,10.8)--(24.9,-2.7);
\draw[thick](-10.2,23)--(0,35)--(10.2,23);
\draw[thick](-18.6,16.7)--(-33.3,10.8)--(-24.9,-2.7);
\draw[thick](-21.7,-12.5)--(-20.6,-28.3)--(-5.2,-24.5);
\draw[thick](21.7,-12.5)--(20.6,-28.3)--(5.2,-24.5);
\draw[fill=black](26,23.4)circle(40pt);
\draw[fill=black](14.2,32)circle(40pt);
\draw[fill=black](-26,23.4)circle(40pt);
\draw[fill=black](-14.2,32)circle(40pt);
\draw[fill=black](-34.8,-3.7)circle(40pt);
\draw[fill=black](-30.3,-17.5)circle(40pt);
\draw[fill=black](34.8,-3.7)circle(40pt);
\draw[fill=black](30.3,-17.5)circle(40pt);
\draw[fill=black](-7.3,-34)circle(40pt);
\draw[fill=black](7.3,-34)circle(40pt);
\draw[fill=black](26.5,36.4)circle(40pt);
\draw[thick](14.2,32)--(26.5,36.4)--(26,23.4);
\draw[fill=black](-26.5,36.4)circle(40pt);
\draw[thick](-14.2,32)--(-26.5,36.4)--(-26,23.4);
\draw[fill=black](-42.8,-13.9)circle(40pt);
\draw[fill=black](42.8,-13.9)circle(40pt);
\draw[fill=black](0,-45)circle(40pt);
\node at (0,-50) {(b)};
\end{tikzpicture} 
\caption{$\Theta_6^1$-free planar graphs}
\label{fgv}
\end{figure}\newpage
\section*{Acknowledgments}

The authors would like to thank the two anonymous referees for their valuable comments and suggestions which significantly improved the paper.
\section*{Dedications}
\textbf{Addisu Paulos} wants to dedicate this work to all Ethiopians who have been marginalized and made to cast their gaze downward from the heights of Ethiopianism due to the prevailing \textbf{ethnically-framed} regime and politics.

\end{document}